\newtheorem{theorem}{Theorem}[section]
\newtheorem{proposition}[theorem]{Proposition}
\newtheorem{lemma}[theorem]{Lemma}
\newtheorem{corollary}[theorem]{Corollary}
\theoremstyle{definition}
\numberwithin{equation}{section}
\DeclareMathOperator{\soporte}{supp}
\DeclareMathOperator{\traza}{Tr}
\begin{document}

\title[Orthogonally additive polynomials]
{Orthogonally additive polynomials on non-commutative $L^p$-spaces}

\author{J. Alaminos}
\address{Departamento de An\' alisis
Matem\' atico\\ Fa\-cul\-tad de Ciencias\\ Universidad de Granada\\
18071 Granada, Spain} 
\email{alaminos@ugr.es}
\author{M. L. C. Godoy}
\address{Departamento de An\' alisis
Matem\' atico\\ Fa\-cul\-tad de Ciencias\\ Universidad de Granada\\
18071 Granada, Spain} 
\email{marisa23@correo.ugr.es}
\author{A.\,R. Villena}
\address{Departamento de An\' alisis
Matem\' atico\\ Fa\-cul\-tad de
 Ciencias\\ Universidad de Granada\\
18071 Granada, Spain} 
\email{avillena@ugr.es}

\date{}

\begin{abstract}
Let $\mathcal{M}$ be a von Neumann algebra with a normal semifinite faithful trace $\tau$.
We prove that every continuous $m$-homogeneous polynomial $P$ from $L^p(\mathcal{M},\tau)$,
with $0<p<\infty$, into each topological linear space $X$ 
with the property that $P(x+y)=P(x)+P(y)$ whenever $x$ and $y$ are mutually orthogonal positive elements of $L^p(\mathcal{M},\tau)$
can be represented  in the form $P(x)=\Phi(x^m)$ 
$(x\in L^p(\mathcal{M},\tau))$ for some continuous linear map 
$\Phi\colon L^{p/m}(\mathcal{M},\tau)\to X$.
\end{abstract}

\subjclass[2010]{
46L10, 46L52, 47H60
}
\keywords{Non-commutative $L^p$-space, Schatten classes, orthogonally additive polynomial}

\thanks{
The authors were supported by MINECO grant MTM2015--65020--P.
The first and the third named authors were supported by Junta de Andaluc\'{\i}a grant FQM--185. 
The second named author was supported by Beca de Iniciaci\'on a la Investigaci\'on de la Universidad de Granada.
}

\maketitle

\section{Introduction}

In \cite{S}, the author succeeded in providing a useful representation 
of the orthogonally additive homogeneous 
polynomials on the spaces $L^p([0,1])$ and $\ell^p$ with $1\le p<\infty$. 
In~\cite{PV} (see also~\cite{CLZ}), the authors obtained a similar 
representation for the space $C(K)$, for a compact Hausdorff space $K$. 
These results were generalized to Banach lattices~\cite{BLL} and Riesz 
spaces ~\cite{ILL}. Further, the problem of representing the orthogonally additive homogeneous polynomials has 
been also considered in the context of Banach function algebras~\cite{AEV,V} and non-commutative 
Banach algebras~\cite{AEGV,AGV,P}. Notably, \cite{P} can be thought of as the natural non-commutative analogue
of the representation of orthogonally additive polynomials on $C(K)$-spaces, and
the purpose to this paper is to extend the results of~\cite{S} on the representation of orthogonally 
additive homogeneous polynomials on $L^p$-spaces to the non-commutative $L^p$-spaces.

The non-commutative $L^p$-spaces that we consider are those associated with a von Neumann algebra
$\mathcal{M}$ equipped with a normal semifinite faithful trace $\tau$.
From now on, $S(\mathcal{M},\tau)$ stands for the linear span of the positive elements $x$ of $\mathcal{M}$ such that
$\tau\bigl(\soporte(x)\bigr)<\infty$; here $\soporte (x)$ stands for the support of $x$.
Then $S(\mathcal{M},\tau)$ is a $\ast$-subalgebra of $\mathcal{M}$ with the property that
$\vert x\vert^p\in S(\mathcal{M},\tau)$ for each $x\in S(\mathcal{M},\tau)$ and each $0<p<\infty$.
For $0<p<\infty$, we define $\Vert\cdot\Vert_p\colon S(\mathcal{M},\tau)\to\mathbb{R}$ by
$\Vert x\Vert_p=\tau(\vert x\vert^p)^{1/p}$ $(x\in S(\mathcal{M},\tau))$.
Then $\Vert\cdot\Vert_p$ is a norm or a $p$-norm according to $1\le p<\infty$ or $0<p<1$, and 
the space $L^p(\mathcal{M},\tau)$ can be defined as the completion of $S(\mathcal{M},\tau)$
with respect to $\Vert\cdot\Vert_p$.
Nevertheless, for our purposes here, it is important to realize the elements of $L^p(\mathcal{M},\tau)$ as
measurable operators. Specifically, the set $L^0(\mathcal{M},\tau)$ of measurable closed densely defined operators 
affiliated to $\mathcal{M}$ is a topological $*$-algebra with respect to the strong sum, the strong product,
the adjoint operation, and the topology of the convergence in measure.
The algebra $\mathcal{M}$ is a dense $*$-subalgebra of $L^0(\mathcal{M},\tau)$, 
the trace $\tau$ extends to the positive cone of $L^0(\mathcal{M},\tau)$ in a natural way, and 
we can define
\begin{gather*}
\Vert x\Vert_p=\tau\bigl(\vert x\vert^p\bigr)^{1/p}\quad (x\in L^0(\mathcal{M},\tau)), \\
L^p(\mathcal{M},\tau)=\bigl\{x\in L^0(\mathcal{M},\tau) : \Vert x\Vert_p<\infty\bigr\}.
\end{gather*}
Also we set $L^\infty(\mathcal{M},\tau)=\mathcal{M}$ (with $\Vert\cdot\Vert_\infty:=\Vert\cdot\Vert$, the operator norm).
Operators $x,y\in L^0(\mathcal{M},\tau)$ are mutually orthogonal, written $x\perp y$,
if $xy^*=y^*x=0$. This condition is equivalent to requiring that $x$ and $y$ have mutually orthogonal left, and right,
supports. Further, for $x,y\in L^p(\mathcal{M},\tau)$ with $0<p<\infty$, 
the condition $x\perp y$ implies that $\Vert x+y\Vert_p^p=\Vert x\Vert_p^p+\Vert y\Vert_p^p$, 
and conversely, if  $\Vert x\pm y\Vert_p^p=\Vert x\Vert_p^p+\Vert y\Vert_p^p$ and $p\ne 2$, then $x\perp y$ 
(see \cite[Fact 1.3]{R}).
The orthogonal additivity considered in \cite{S} for the spaces $L^p([0,1])$ and $\ell^p$ can of course equally well
be considered for the space $L^p(\mathcal{M},\tau)$.
Let $P$ be a map from $L^p(\mathcal{M},\tau)$ into a linear space $X$. Then $P$ is:
\begin{enumerate}
\item[(i)]
\emph{orthogonally additive} on a subset $\mathcal{S}$ of $L^p(\mathcal{M},\tau)$ if
\begin{equation*}
x,y \in \mathcal{S}, \, x\perp y=0 \ \Rightarrow \ P(x+y)=P(x)+P(y);
\end{equation*}
\item[(ii)]
an \emph{$m$-homogeneous polynomial} if
there exists an $m$-linear map $\varphi$ from $L^p(\mathcal{M},\tau)^m$ into $X$ such that
\begin{equation*}
P(x)=\varphi \left( x,\dotsc,x \right)\quad(x\in L^p(\mathcal{M},\tau)).
\end{equation*} 
Here and subsequently, $m\in\mathbb{N}$ is fixed with $m\ge 2$ and 
the superscript $m$ stands for the $m$-fold Cartesian product. 
Such a map is unique if it is required to be symmetric. 
Further, in the case where $X$ is a topological linear space, the
polynomial $P$ is continuous if and only if the symmetric $m$-linear
map $\varphi$ associated with $P$ is continuous.
\end{enumerate}
Given a continuous linear map $\Phi\colon L^{p/m}(\mathcal{M},\tau)\to X$, 
where $X$ is an arbitrary topological linear space, the map $P_\Phi\colon L^p(\mathcal{M},\tau)\to X$
defined by 
\begin{equation*}
P_\Phi(x)=\Phi(x^m)\quad(x\in L^p(\mathcal{M},\tau))
\end{equation*}
is a natural example of a continuous $m$-homogeneous polynomial which is orthogonally additive on
$L^p(\mathcal{M},\tau)_{\textup{sa}}$ (Theorem~\ref{1242}), and we will prove that every continuous 
$m$-homogeneous polynomial which is orthogonally additive on $L^p(\mathcal{M},\tau)_{\textup{sa}}$
is actually of this special form (Theorem~\ref{1527}). Here and subsequently,
the subscripts ``sa'' and $+$ are used to denote the self-adjoint and the positive parts
of a given subset of $L^0(\mathcal{M},\tau)$, respectively.

We require a few remarks about the setting of our present work.
Throughout the paper we are concerned with $m$-homogeneous polynomials on the space $L^p(\mathcal{M},\tau)$ 
with $0<p$, and thus one might wish to consider polynomials with values in the space $L^q(\mathcal{M},\tau)$,
especially with $q\le p$.
Further, in the case case where $p/m<1$ and the von Neumann algebra $\mathcal{M}$ has no minimal projections, 
there are no non-zero continuous linear functionals on $L^{p/m}(\mathcal{M},\tau)$;
since one should like to have non-trivial ``orthogonally additive'' polynomials on $L^p(\mathcal{M},\tau)$, 
some weakening of the normability must be allowed to the range space (see Corollary~\ref{1416}).
For these reasons, throughout the paper, $X$ will be a (complex and Hausdorff) topological linear space.
In the case where the von Neumann algebra $\mathcal{M}$ is commutative,
the prototypical polynomials $P_\Phi$ mentioned above are easily seen to be orthogonally additive on
the whole domain. 
In contrast, we will point out in Propositions~\ref{1525} and~\ref{1526}
that this is not the case for the von Neumann algebra
$\mathcal{B}(H)$ of all bounded operators on a Hilbert space $H$ whenever $\dim H\ge 2$.

We assume a basic knowledge of $C^*$-algebras and von Neumann algebras, 
tracial non-commutative $L^p$-spaces, and
polynomials on topological linear spaces.
For the relevant background material concerning these topics, see
\cite{B,DAV,Tak1}, \cite{Ne,Pi,Te}, and \cite{Di}, respectively.

\section{$C^*$-algebras and von Neumann algebras} 

Our approach to the problem of representing the orthogonally 
additive homogeneous polynomials on the non-commutative $L^p$-spaces 
relies on the representation of those polynomials on the von Neumann 
algebras. Although \cite{P} solves the problem of representing the 
orthogonally additive homogeneous polynomials from
a $C^*$-algebra into an arbitrary Banach space, the greater 
generality of our range space does not allow us
to apply the results of that paper. Thus our first aim is to 
extend~\cite{P} to polynomials with values in a topological linear space.

Recall that two elements $x$ and $y$ of a $C^*$-algebra $\mathcal{A}$ are mutually orthogonal
if $xy^*=y^*x=0$, in which case the identity $\Vert x+y\Vert=\max\{\Vert x\Vert,\Vert y\Vert\}$ holds.

Suppose that $\mathcal{A}$ is a linear space with an involution $\ast$.
Recall that for a linear functional $\Phi\colon \mathcal{A}\to \mathbb{C}$, 
the map $\Phi^*\colon \mathcal{A}\to\mathbb{C}$ defined by
$\Phi^*(x)=\overline{\Phi(x^{*})}$ $(x\in \mathcal{A})$ is a linear functional, and 
$\Phi$ is said to be \emph{hermitian} if $\Phi^*=\Phi$.
Similarly, for an $m$-homogeneous polynomial $P\colon \mathcal{A}\to \mathbb{C}$, 
the map $P^*\colon\mathcal{A}\to\mathbb{C}$
defined by $P^*(x)=\overline{P(x^*)}$ $(x\in\mathcal{A})$
is an $m$-homogeneous polynomial, and we call  $P$ \emph{hermitian}
if $P^*=P$. 

\begin{lemma}\label{l1}
Let $X$ and $Y$ be linear spaces, and let $P\colon X\to Y$ be an $m$-homogeneous polynomial.
Suppose that $P$ vanishes on a convex set $C\subset X$. 
Then $P$ vanishes on the linear span of $C$.
\end{lemma}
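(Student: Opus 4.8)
The plan is to recover the symmetric $m$-linear map $\varphi$ associated with $P$ and to prove that it vanishes as soon as all of its arguments are taken from $C$; once this is known, vanishing of $P$ on $\operatorname{span}C$ is immediate. Indeed, let $\varphi$ be the unique symmetric $m$-linear map with $P(x)=\varphi(x,\dotsc,x)$. If $x=\sum_{j=1}^{n}\alpha_j c_j$ with $\alpha_j\in\mathbb{C}$ and $c_j\in C$, then $m$-linearity gives $P(x)=\varphi(x,\dotsc,x)=\sum_{j_1,\dotsc,j_m}\alpha_{j_1}\dotsm\alpha_{j_m}\,\varphi(c_{j_1},\dotsc,c_{j_m})$, so $P(x)=0$ provided $\varphi$ annihilates every $m$-tuple of elements of $C$. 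Thus the whole proof reduces to the claim that $\varphi(c_1,\dotsc,c_m)=0$ for all $c_1,\dotsc,c_m\in C$.

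To establish the claim I would fix $c_1,\dotsc,c_m\in C$ and study the $Y$-valued function $R(\lambda)=P\bigl(\sum_{i=1}^{m}\lambda_i c_i\bigr)$ of $\lambda=(\lambda_1,\dotsc,\lambda_m)\in\mathbb{R}^m$. Expanding via $\varphi$ gives $R(\lambda)=\sum_{i_1,\dotsc,i_m=1}^{m}\lambda_{i_1}\dotsm\lambda_{i_m}\,\varphi(c_{i_1},\dotsc,c_{i_m})$, a homogeneous polynomial of degree $m$ in $\lambda$ with coefficients in $Y$. Whenever $\lambda_i\ge 0$ and $\sum_i\lambda_i=1$, the point $\sum_i\lambda_i c_i$ is a convex combination of elements of $C$, hence lies in $C$, so $R$ vanishes on the standard simplex $\Delta$. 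By homogeneity this forces $R$ to vanish on the whole open positive orthant: for $\mu$ with all $\mu_i>0$ one has $\mu/s\in\Delta$ with $s=\sum_i\mu_i>0$, so $R(\mu)=s^{m}R(\mu/s)=0$.

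The main point is then the passage from vanishing on an open set to vanishing of the individual coefficients. Composing $R$ with an arbitrary linear functional on $Y$ yields a scalar polynomial that vanishes on a nonempty open subset of $\mathbb{R}^m$ and is therefore identically zero, so all of its coefficients vanish; since the algebraic dual of $Y$ separates points, every coefficient of $R$ is zero. In particular the coefficient of the monomial $\lambda_1\dotsm\lambda_m$ vanishes, and by symmetry of $\varphi$ that coefficient equals $m!\,\varphi(c_1,\dotsc,c_m)$; as $m!\ne 0$ we get $\varphi(c_1,\dotsc,c_m)=0$, which is exactly the claim. The only genuinely delicate step is this coefficient extraction, and it is harmless here since the ground field has characteristic zero and a polynomial vanishing on an open set vanishes identically. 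Note also that all the scalars $\lambda_i$ involved are real, so the convexity of $C$ suffices even though the span is taken over $\mathbb{C}$, the extension to complex coefficients being absorbed into the final $m$-linear expansion.
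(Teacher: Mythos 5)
Your proof is correct, but it follows a different route from the paper's. You polarize: you show that the symmetric $m$-linear form $\varphi$ annihilates every $m$-tuple from $C$, by restricting $P$ to real-linear combinations $\sum\lambda_i c_i$, using homogeneity to pass from the simplex to the open positive orthant, killing the scalar polynomials $\eta\circ R$ there, and extracting the coefficient $m!\,\varphi(c_1,\dotsc,c_m)$ of $\lambda_1\dotsm\lambda_m$; complex scalars in $\operatorname{span}C$ are then handled at the very end by $\mathbb{C}$-multilinearity of $\varphi$. The paper instead fixes only \emph{four} elements $x_1,\dotsc,x_4\in C$, forms the scalar polynomial $f(\alpha_1,\dotsc,\alpha_4)=\eta\bigl(P(\alpha_1x_1+\dotsb+\alpha_4x_4)\bigr)$ on $\mathbb{C}^4$, observes that vanishing on the real simplex forces $f\equiv 0$ on all of $\mathbb{C}^4$ (implicitly the same homogeneity-plus-open-set argument you make explicit), and then simply \emph{evaluates} $f$ at the complex points $(\rho_1,-\rho_2,i\rho_3,-i\rho_4)$, using the fact that every element of the complex span of a convex set has the form $\rho_1x_1-\rho_2x_2+i\rho_3x_3-i\rho_4x_4$ with $\rho_j\ge 0$, $x_j\in C$. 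So the paper handles complex coefficients by polynomial identity (evaluating the extended polynomial at complex arguments), with no coefficient bookkeeping and no appeal to the polarization formula, while your argument yields the strictly stronger intermediate conclusion $\varphi|_{C^m}=0$ and never needs the four-term description of $\operatorname{span}C$; both rely on the same two kernels, namely that a polynomial vanishing on a nonempty open subset of $\mathbb{R}^n$ vanishes identically and that the algebraic dual of $Y$ separates points. Your only step requiring care, the identification of the coefficient of $\lambda_1\dotsm\lambda_m$ as $m!\,\varphi(c_1,\dotsc,c_m)$, is correct precisely because you took $\varphi$ symmetric, and $m!\neq 0$ in characteristic zero, so the argument is complete.
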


\begin{proof}
Set $x_1,x_2,x_3,x_4\in C$.
Let $\eta\colon Y\to\mathbb{C}$ be a linear functional, and 
define $f\colon\mathbb{C}^4\to\mathbb{C}$  by
\[
f(\alpha_1,\alpha_2,\alpha_3,\alpha_4)=
\eta\bigl(P(\alpha_1x_1+\alpha_2 x_2+\alpha_3x_3+\alpha x_4)\bigr)\quad
(\alpha_1,\alpha_2,\alpha_3,\alpha_4\in\mathbb{C}).
\]
Then $f$ is a complex polynomial function in four complex variables that vanishes on the
set
\[
\bigl\{(\rho_1,\rho_2,\rho_3,\rho_4)\in\mathbb{R}^4 :
0\le\rho_1,\rho_2,\rho_3,\rho_4, \ \rho_1+\rho_2+\rho_3+\rho_4=1\bigr\}.
\]
This implies that $f$ is identically equal to $0$ on $\mathbb{C}^4$, and, in particular, 
\[
\eta\bigl(P(\rho_1x_1-\rho_2x_2+i\rho_3x_3-i\rho_4x_4)\bigr)=f(\rho_1,-\rho_2,i\rho_3,-i\rho_4)=0
\]
for all $\rho_1,\rho_2,\rho_3,\rho_4\ge 0$.
Since this identity holds for each linear functional $\eta$, it may be concluded that 
$P(\rho_1x_1-\rho_2x_2+i\rho_3x_3-i\rho_4x_4)=0$ for all $\rho_1,\rho_2,\rho_3,\rho_4\ge 0$.
Thus $P$ vanishes on the set 
\[
\bigl\{
\rho_1x_1-\rho_2x_2+i\rho_3x_3-i\rho_4x_4 :
\rho_j\ge 0, \ x_j\in C \ (j=1,2,3,4)
\bigr\},
\]
which is exactly the linear span of the set $C$.
\end{proof}

\begin{theorem}\label{1522}
Let $\mathcal{A}$ be a $C^*$-algebra,
let $X$ be a topological linear space, and
let $\Phi\colon \mathcal{A}\to X$ be a continuous linear map.
Then:
\begin{enumerate}
\item[(i)] 
the map $P_\Phi\colon\mathcal{A}\to X$ defined by
$P_\Phi(x)=\Phi(x^m)$ $(x\in\mathcal{A})$ is a continuous $m$-homogeneous polynomial which
is orthogonally additive on $\mathcal{A}_{\textup{sa}}$;
\item[(ii)]
the polynomial $P_\Phi$ is uniquely specified by the map $\Phi$.
\end{enumerate}
Suppose, further, that $X$ is a $q$-normed space, $0<q\le 1$. Then:
\begin{enumerate}
\item[(iii)]
$2^{-1/q}\Vert\Phi\Vert\le\Vert P_\Phi\Vert\le\Vert\Phi\Vert$.
\end{enumerate}
Moreover, in the case where $X=\mathbb{C}$,
\begin{enumerate}
\item[(iv)]
the functional $\Phi$ is hermitian if and only if the polynomial $P_\Phi$ is hermitian, 
in which case $\Vert P_\Phi\Vert=\Vert\Phi\Vert$.
\end{enumerate}
\end{theorem}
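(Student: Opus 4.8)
The plan is to treat the four assertions in turn, with part (iii) carrying essentially all of the difficulty. For (i), I would exhibit the witnessing $m$-linear map by composing the product with $\Phi$: the map $\varphi(x_1,\dots,x_m)=\Phi(x_1\cdots x_m)$ is $m$-linear, its diagonal is $P_\Phi$, and it is continuous because multiplication in a $C^*$-algebra is jointly continuous and $\Phi$ is continuous (symmetrising, if desired, preserves continuity). Orthogonal additivity on $\mathcal{A}_{\textup{sa}}$ is then a one-line computation: if $x,y\in\mathcal{A}_{\textup{sa}}$ and $x\perp y$, then $xy^*=y^*x=0$ forces $xy=yx=0$, so every mixed monomial in the expansion of $(x+y)^m$ vanishes and $(x+y)^m=x^m+y^m$; applying $\Phi$ gives the claim. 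For (ii), which I read as the injectivity of $\Phi\mapsto P_\Phi$, the key observation is that every positive $a\in\mathcal{A}$ is an $m$-th power, $a=(a^{1/m})^m$ via the continuous functional calculus, and that the positive cone spans $\mathcal{A}$; hence if $P_\Phi=P_\Psi$, the linear map $\Phi-\Psi$ vanishes on every $x^m$, in particular on $\mathcal{A}_+$, and therefore on all of $\mathcal{A}$, forcing $\Phi=\Psi$.

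For (iii), the upper bound is immediate: for $\|x\|\le 1$ one has $\|P_\Phi(x)\|=\|\Phi(x^m)\|\le\|\Phi\|\,\|x\|^m\le\|\Phi\|$ by submultiplicativity. The lower bound is the heart of the matter. Here I would first show that every self-adjoint $a$ with $\|a\|\le 1$ is of the form $x^m$ with $\|x\|\le 1$: writing $a=a_+-a_-$ with $a_+\perp a_-$ positive, and setting $x=a_+^{1/m}+e^{i\pi/m}a_-^{1/m}$, the orthogonality of $a_+^{1/m}$ and $a_-^{1/m}$ kills the cross terms, so $x^m=a_++e^{i\pi}a_-=a$, while the same orthogonality yields $\|x\|=\max\{\|a_+\|^{1/m},\|a_-\|^{1/m}\}=\|a\|^{1/m}\le 1$. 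Consequently $\|\Phi(a)\|=\|P_\Phi(x)\|\le\|P_\Phi\|$ for self-adjoint $a$ in the unit ball. For a general $a$ with $\|a\|\le 1$, I decompose $a=h+ik$ into its self-adjoint parts, with $\|h\|,\|k\|\le 1$, and invoke the $q$-norm inequality $\|\Phi(h)+i\Phi(k)\|^q\le\|\Phi(h)\|^q+\|\Phi(k)\|^q\le 2\|P_\Phi\|^q$, giving $\|\Phi(a)\|\le 2^{1/q}\|P_\Phi\|$ and hence $2^{-1/q}\|\Phi\|\le\|P_\Phi\|$.

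For (iv), the equivalence of hermiticity follows from the identity $(x^m)^*=(x^*)^m$, which gives $P_{\Phi^*}=P_\Phi^*$; thus $P_\Phi$ is hermitian exactly when $P_{\Phi^*}=P_\Phi$, and by the uniqueness established in (ii) this is equivalent to $\Phi^*=\Phi$. For the norm equality when $\Phi$ is hermitian, I would use that a hermitian functional attains its norm on the self-adjoint unit ball; since the self-adjoint representation above already gives $|\Phi(a)|\le\|P_\Phi\|$ there, we obtain $\|\Phi\|\le\|P_\Phi\|$, which together with the upper bound from (iii) yields $\|P_\Phi\|=\|\Phi\|$.

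The main obstacle I anticipate is the lower bound in (iii): obtaining the sharp constant $2^{-1/q}$ requires the $m$-th-root-of-unity representation of self-adjoint elements (so that the self-adjoint estimate incurs no loss), rather than a naive split into four positive summands, together with care that the orthogonality $a_+\perp a_-$ is preserved under the functional calculus $t\mapsto t^{1/m}$, so that both $x^m=a$ and the norm identity $\|x\|=\|a\|^{1/m}$ hold simultaneously; the only remaining loss, the factor $2^{1/q}$, then enters solely through the self-adjoint decomposition $a=h+ik$ and the quasi-triangle inequality of the $q$-norm.
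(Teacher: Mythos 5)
Your proposal is correct and follows essentially the same route as the paper's proof: the symmetrized product map for (i), the $m$-th-root representation of positive elements for (ii), the root-of-$(-1)$ decomposition $x=a_+^{1/m}+\omega\,a_-^{1/m}$ with $\omega^m=-1$ for the lossless self-adjoint estimate in (iii) (so that the factor $2^{1/q}$ enters only through the quasi-triangle inequality applied to $a=h+ik$), and the reduction of a hermitian functional's norm to the self-adjoint unit ball for (iv). The only cosmetic difference is that in (iv) the paper proves that reduction inline via the rotation trick $|\Phi(x)|=\Phi(\alpha x)=\Phi\bigl(\Re(\alpha x)\bigr)$, whereas you invoke it as a standard fact.
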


\begin{proof}
(i)
It is clear that the map $P_\Phi$ is continuous and that $P_\Phi$ is the $m$-homogeneous polynomial associated with
the symmetric $m$-linear map $\varphi\colon \mathcal{A}^m\to X$ defined by
\[
\varphi(x_1,\ldots,x_m)= 
\frac{1}{m!}\sum_{\sigma \in \mathfrak{S}_m} 
\Phi\left(x_{\sigma(1)} \cdots x_{\sigma(m)} \right) \quad (x_1,\ldots,x_m\in\mathcal{A});
\]
here and subsequently, we write $\mathfrak{S}_m$ for the symmetric group of order $m$.

Suppose that $x,y\in\mathcal{A}_{\textup{sa}}$ are such that $x\perp y$. 
Then $xy=yx=0$, and so
$(x+y)^m=x^m+y^m$, which gives
\[
P_\Phi(x+y)=\Phi\bigl((x+y)^m\bigr)=
\Phi\bigl(x^m+y^m\bigr)=\Phi\bigl(x^m\bigr)+\Phi\bigl(y^m\bigr)=
P_\Phi(x)+P_\Phi(y).
\]

(ii)
Assume that $\Psi\colon\mathcal{A}\to X$ is a linear map with the property that $P_\Psi=P_\Phi$.
If $x\in\mathcal{A}_+$, then
\[
\Phi(x)=\Phi\bigl((x^{1/m})^{m}\bigr)=P(x^{1/m})=\Psi\bigl((x^{1/m})^{m}\bigr)=\Psi(x).
\]
By linearity we also get $\Psi(x) =  \Phi(x)$ for each $x \in \mathcal{A}$. 

(iii)
Next, assume that $X$ is a $q$-normed space.
For each $x\in\mathcal{A}$, we have
\[
\Vert P_\Phi(x)\Vert=\Vert\Phi(x^m)\Vert\le\Vert\Phi\Vert\Vert x^m\Vert\le\Vert\Phi\Vert\Vert x\Vert^m,
\] 
which implies that $\Vert P_\Phi\Vert\le\Vert\Phi\Vert$.
Now take $x\in\mathcal{A}$, and let $\omega\in\mathbb{C}$ with $\omega^m=-1$.
Then $x=\Re x+i\Im x$, where
\[
\Re x=\frac{1}{2}(x^*+x), \, 
\Im x=\frac{i}{2}(x^*-x)\in \mathcal{A}_{\textup{sa}},
\]
and, further,
$\Vert\Re x\Vert,\Vert\Im x\Vert\le\Vert x\Vert$.
Moreover, 
$\Re x=x_1-x_2$ and $\Im x=x_3-x_4$, 
where $x_1,x_2,x_3,x_4\in\mathcal{A}_+$, $x_1\perp x_2$, and $x_3\perp x_4$.
Since $x_1\perp x_2$ and $x_3\perp x_4$, it follows that
$x_1^{1/m}\perp x_2^{1/m}$ and $x_3^{1/m}\perp x_4^{1/m}$.
Consequently,
\begin{equation}\label{1925}
\begin{split}
\Vert\Re x\Vert
&=\max\bigl\{\Vert x_1\Vert,\Vert x_2\Vert\bigr\},\\
\Vert\Im x\Vert
&=\max\bigl\{\Vert x_3\Vert,\Vert x_4\Vert\bigr\},
\end{split}
\end{equation}
and 
\begin{equation}\label{1928}
\begin{split}
\bigl\Vert
x_1^{1/m}+\omega x_2^{1/m}
\bigr\Vert&=
\max\bigl\{
\bigl\Vert x_1^{1/m}\bigr\Vert,\bigr\Vert x_2^{1/m}
\bigr\Vert
\bigr\},\\
\bigl\Vert
x_3^{1/m}+\omega x_4^{1/m}
\bigr\Vert&=
\max\bigl\{
\bigl\Vert x_3^{1/m}\bigr\Vert,\bigr\Vert x_4^{1/m}
\bigr\Vert
\bigr\}.
\end{split}
\end{equation}
Since
\[
\bigl\Vert x_1^{1/m}\bigr\Vert=
\Vert x_1\Vert^{1/m}, \
\bigl\Vert x_2^{1/m}\bigr\Vert=
\Vert x_2\Vert^{1/m}, \
\bigl\Vert x_3^{1/m}\bigr\Vert=
\Vert x_3\Vert^{1/m}, \
\bigl\Vert x_4^{1/m}\bigr\Vert=
\Vert x_4\Vert^{1/m},
\]
it follows, from \eqref{1925} and \eqref{1928}, that
\begin{equation}\label{1929}
\begin{split}
\bigl\Vert x_1^{1/m}+\omega x_2^{1/m}\bigr\Vert^m
&=
\max\bigl\{\Vert x_1\Vert,\Vert x_2\Vert\bigr\}=\Vert\Re x\Vert,\\
\bigl\Vert x_3^{1/m}+\omega x_4^{1/m}\bigr\Vert^m
&=
\max\bigl\{\Vert x_3\Vert,\Vert x_4\Vert\bigr\}=\Vert\Im x\Vert.
\end{split}
\end{equation}
On the other hand, we have
\begin{equation*}
\bigl(x_1^{1/m}+\omega x_2^{1/m}\bigr)^m=x_1-x_2=\Re x,\quad
\bigl(x_3^{1/m}+\omega x_4^{1/m}\bigr)^m=x_3-x_4=\Im x,
\end{equation*}
and so
\begin{equation*}
\begin{split}
\Phi(x)
&=
\Phi(\Re x)+i\Phi(\Im x)=
\Phi\bigl(\bigl(x_1^{1/m}+\omega x_2^{1/m}\bigr)^m\bigr)+
i\Phi\bigl(\bigl(x_3^{1/m}+\omega x_4^{1/m}\bigr)^m\bigr)\\
&=
P_\Phi\bigl(x_1^{1/m}+\omega x_2^{1/m}\bigr)+
iP_\Phi\bigl(x_3^{1/m}+\omega x_4^{1/m}\bigr).
\end{split}
\end{equation*}
Hence, by \eqref{1929},
\begin{equation*}
\begin{split}
\Vert\Phi(x)\Vert^q
&\le
\bigl\Vert P_\Phi\bigl(x_1^{1/m}+\omega x_2^{1/m}\bigr)\bigr\Vert^q+
\bigl\Vert P_\Phi\bigl(x_3^{1/m}+\omega x_4^{1/m}\bigr)\bigr\Vert^q\\
&\le
\Vert P_\Phi\Vert^q\bigl\Vert x_1^{1/m}+\omega x_2^{1/m}\bigr\Vert^{mq}+
\Vert P_\Phi\Vert^q\bigl\Vert x_3^{1/m}+\omega x_4^{1/m}\bigr\Vert^{mq}\\
&=
\Vert P_\Phi\Vert^q\left(
\Vert\Re x\Vert^q+\Vert\Im x\Vert^q\right)\\
&\le
\Vert P_\Phi\Vert^q 2\Vert x\Vert^q.
\end{split}
\end{equation*}
This clearly forces $\Vert\Phi\Vert\le 2^{1/q}\Vert P_\Phi\Vert$, as claimed.

(iv)
It is straightforward to check that $P_\Phi^*=P_{\Phi^*}$. 
Consequently, if $\Phi$ is hermitian, then $P_\Phi^*=P_{\Phi^*}=P_\Phi$ so that
$P_\Phi$ is hermitian.
Conversely, if $P_\Phi$ is hermitian, then $P_{\Phi^*}=P_\Phi^*=P_\Phi$ and (ii) implies
that $\Phi^*=\Phi$.
Finally, assume that $\Phi$ is a hermitian functional.
For the calculation of $\Vert P_\Phi\Vert$ it suffices to check
that $\Vert\Phi\Vert\le\Vert P_\Phi\Vert$. For this purpose,
let $\varepsilon\in\mathbb{R}^+$, and choose $x\in\mathcal{A}$ such that
$\Vert x\Vert=1$ and 
$\Vert\Phi\Vert-\varepsilon<\vert\Phi(x)\vert$.
We take $\alpha\in\mathbb{C}$ with $\vert\alpha\vert=1$ and 
$\vert\Phi(x)\vert=\alpha\Phi(x)$, so that
\[
\Vert\Phi\Vert-\varepsilon<\vert\Phi(x)\vert=\Phi(\alpha x)=
\overline{\Phi(\alpha x)}=\Phi\bigl((\alpha x)^*\bigr).
\]
Note that $\Vert\Re(\alpha x)\Vert\le 1$ and $\Vert\Phi\Vert-\varepsilon<\Phi(\Re(\alpha x))$.
Now we consider the decomposition $\Re(\alpha x)=x_1-x_2$
with $x_1,x_2\in\mathcal{A}_+$ and $x_1\perp x_2$ and take $\omega\in\mathbb{C}$ with $\omega^m=-1$.
As in \eqref{1929}, we see that
$\bigl\Vert x_1^{1/m}+\omega x_2^{1/m}\bigr\Vert=\Vert\Re(\alpha x)\Vert^{1/m}\le 1$.
Moreover, we have
\[
P_\Phi\bigl( x_1^{1/m}+\omega x_2^{1/m}\bigr)=
\Phi\bigl(\bigl( x_1^{1/m}+\omega x_2^{1/m}\bigr)^m\bigr)=
\Phi(\Re(\alpha x)),
\]
which gives $\Vert\Phi\Vert-\varepsilon<\Vert P_\Phi\Vert$.
\end{proof}

\begin{lemma}\label{l12}
Let $\mathcal{A}$ be a $C^*$-algebra, 
let $\mathcal{R}$ be a $\ast$-subalgebra of $\mathcal{A}$, 
let $X$ be a topological linear space, and 
let $\Phi\colon\mathcal{R}\to X$ be a linear map. 
Suppose that the polynomial $P\colon\mathcal{R}\to X$ defined by
$P(x)=\Phi(x^m)$ $(x\in\mathcal{R})$ is continuous and that $\mathcal{R}$ satisfies the following conditions:
\begin{enumerate}
\item[(i)]
$\vert x\vert\in\mathcal{R}$ for each $x\in\mathcal{R}_{\textup{sa}}$;
\item[(ii)]
$x^{1/m}\in\mathcal{R}$ for each $x\in\mathcal{R}_+$.
\end{enumerate}
Then $\Phi$ is continuous.
\end{lemma}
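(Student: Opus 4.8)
The plan is to recycle the decomposition from part~(iii) of Theorem~\ref{1522}, but to interpret the resulting formula as an expression of $\Phi$ as a \emph{composition of continuous maps} rather than as a norm estimate, since here $X$ carries only a topological linear space structure. Fix $\omega\in\mathbb{C}$ with $\omega^m=-1$. Given $x\in\mathcal{R}$, first I would write $x=\Re x+i\,\Im x$ with $\Re x=\frac12(x+x^*)$ and $\Im x=\frac{i}{2}(x^*-x)$; as $\mathcal{R}$ is a $\ast$-subalgebra, both lie in $\mathcal{R}_{\textup{sa}}$. I then decompose $\Re x=a_1-a_2$ and $\Im x=a_3-a_4$ into positive and negative parts, so that $a_1=\frac12(|\Re x|+\Re x)$, $a_2=\frac12(|\Re x|-\Re x)$, and likewise $a_3,a_4$. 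Condition~(i) gives $|\Re x|,|\Im x|\in\mathcal{R}$, whence $a_1,a_2,a_3,a_4\in\mathcal{R}_+$, and condition~(ii) then yields $a_j^{1/m}\in\mathcal{R}$ for $j=1,2,3,4$.

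Next I would record the algebraic identity that drives everything. Since $a_1\perp a_2$ (orthogonal positive elements, so $a_1a_2=a_2a_1=0$), the functional calculi commute and $a_1^{1/m}a_2^{1/m}=0$; hence every cross term in the expansion of $\bigl(a_1^{1/m}+\omega a_2^{1/m}\bigr)^m$ vanishes and $\bigl(a_1^{1/m}+\omega a_2^{1/m}\bigr)^m=a_1+\omega^m a_2=a_1-a_2=\Re x$, with the analogous identity for $\Im x$. Applying $\Phi$ and using $P(\,\cdot\,)=\Phi\bigl((\,\cdot\,)^m\bigr)$ gives $P\bigl(a_1^{1/m}+\omega a_2^{1/m}\bigr)=\Phi(\Re x)$ and $P\bigl(a_3^{1/m}+\omega a_4^{1/m}\bigr)=\Phi(\Im x)$, so that, by linearity of $\Phi$,
\[
\Phi(x)=P\bigl(a_1^{1/m}+\omega a_2^{1/m}\bigr)+iP\bigl(a_3^{1/m}+\omega a_4^{1/m}\bigr).
\]

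It then remains to verify that the two maps $x\mapsto a_1^{1/m}+\omega a_2^{1/m}$ and $x\mapsto a_3^{1/m}+\omega a_4^{1/m}$ are continuous from $\mathcal{R}$ into $\mathcal{R}$ for the norm topology inherited from $\mathcal{A}$. Each is a composition of the bounded linear maps $x\mapsto\Re x$, $x\mapsto\Im x$ with the operations $a\mapsto\frac12(|a|\pm a)$ on $\mathcal{A}_{\textup{sa}}$ and $z\mapsto z^{1/m}$ on $\mathcal{A}_+$. Granting this, both displayed maps are continuous; composing with the continuous polynomial $P$ and using that addition and scalar multiplication are continuous in the topological linear space $X$, the formula above exhibits $\Phi$ as a sum of two continuous maps $\mathcal{R}\to X$, whence $\Phi$ is continuous. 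The hard part will be exactly this one analytic point: the norm-continuity of the functional calculus for $|\cdot|$ and for the $m$-th root. I would settle it by the standard approximation argument, namely that for a continuous scalar function $f$ the map $a\mapsto f(a)$ is norm-continuous on each bounded subset of $\mathcal{A}_{\textup{sa}}$ (uniformly approximate $f$ by polynomials on a compact interval containing the relevant spectra, and use that polynomials act norm-continuously); since every norm-convergent sequence is bounded and the norm topology is metrizable, this upgrades to continuity of $a\mapsto|a|$ on $\mathcal{A}_{\textup{sa}}$ and of $z\mapsto z^{1/m}$ on $\mathcal{A}_+$. Everything else is the purely algebraic bookkeeping already present in the proof of Theorem~\ref{1522}(iii).
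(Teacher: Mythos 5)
Your proposal is correct, but it proves continuity of $\Phi$ by a genuinely different mechanism than the paper. You exhibit $\Phi$ globally as a sum of compositions of continuous maps, $\Phi(x)=P\bigl(a_1^{1/m}+\omega a_2^{1/m}\bigr)+iP\bigl(a_3^{1/m}+\omega a_4^{1/m}\bigr)$, using the $\omega^m=-1$ trick to merge each pair of orthogonal positive parts into a single evaluation of $P$; the price is that you must invoke the norm-continuity of the continuous functional calculus (of $a\mapsto\vert a\vert$ on $\mathcal{A}_{\textup{sa}}$ and $a\mapsto a^{1/m}$ on $\mathcal{A}_+$), which you correctly identify as the hard analytic point and settle by uniform polynomial approximation on bounded sets (for a non-unital $\mathcal{A}$ one should note that the approximating polynomials can be taken without constant term, since both functions vanish at $0$; this is standard). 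The paper avoids this tool entirely: since $\Phi$ is linear, it suffices to check continuity at $0$, so the paper writes $\Phi(x)=P\bigl(x_1^{1/m}\bigr)-P\bigl(x_2^{1/m}\bigr)+iP\bigl(x_3^{1/m}\bigr)-iP\bigl(x_4^{1/m}\bigr)$ as a four-term combination and uses only the elementary norm identity $\bigl\Vert x_j^{1/m}\bigr\Vert=\Vert x_j\Vert^{1/m}\le\Vert x\Vert^{1/m}$ together with a balanced neighbourhood $V$ of $0$ in $X$ satisfying $V+V+V+V\subset U$: if $\Vert x\Vert<r^m$ then each argument has norm $<r$, so each of the four terms lies in $V$ and $\Phi(x)\in U$. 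Thus the paper's argument is more elementary and self-contained (no continuity of functional calculus, no metrizability considerations), while yours yields the structurally cleaner statement that $\Phi$ factors through continuous maps — both are valid proofs of the lemma.
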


\begin{proof}
Let $U$ be a neighbourhood of $0$ in $X$. Let $V$ be a balanced neighbourhood of $0$ in $X$ with
$V+V+V+V\subset U$. 
The set $P^{-1}(V)$ is a neighbourhood of $0$ in $\mathcal{R}$,
which implies that there exists $r\in\mathbb{R}^+$ such that $P(x)\in V$ whenever $x\in\mathcal{R}$
and $\Vert x\Vert<r$.
Take $x\in \mathcal{R}$ with $\Vert x\Vert<r^m$.
Since $\mathcal{R}$ is a $\ast$-subalgebra of $\mathcal{A}$, we see that
$\Re x,\Im x\in\mathcal{R}_{\textup{sa}}$. We write
$\Re x=x_1-x_2$ and $\Im x=x_3-x_4$, as in the proof of Theorem~\ref{1522}, where, on account of the condition (i),
\begin{align*}
x_1
&=\tfrac{1}{2}\bigl(\vert \Re x\vert+\Re x\bigr)\in\mathcal{R}_+,&
x_2&
=\tfrac{1}{2}\bigl(\vert\Re x\vert-\Re x\bigr) \in\mathcal{R}_+,\\
x_3
&=\tfrac{1}{2}\bigl(\vert \Im x\vert+\Im x\bigr)\in\mathcal{R}_+,&
x_4
&=\tfrac{1}{2}\bigl(\vert\Im x\vert-\Im x\bigr)
\in\mathcal{R}_+.
\end{align*}
For each $j\in\{1,2,3,4\}$,
condition (ii) gives $x_j^{1/m}\in\mathcal{R}$, and, further,
we have
$\Vert x_{j}^{1/m}\bigr\Vert=\Vert x_j\Vert^{1/m}\le\Vert x\Vert^{1/m}<r$.
Hence
\begin{align*}
\Phi(x) & =
\Phi\bigl(
\bigl(x_{1}^{1/m}\bigr)^m-\bigl(x_{2}^{1/m}\bigr)^m+i\bigl(x_{3}^{1/m}\bigr)^m
 -i\bigl(x_{4}^{1/m}\bigr)^m\bigr) \\
& =
\Phi\bigl(\bigl(x_{1}^{1/m}\bigr)^m\bigr)-
\Phi\bigl(\bigl(x_{2}^{1/m}\bigr)^m\bigr)+
i\Phi\bigl(\bigl(x_{3}^{1/m}\bigr)^m\bigr)-
i\Phi\bigl(\bigl(x_{4}^{1/m}\bigr)^m\bigr) \\
& =
P\bigl(x_{1}^{1/m}\bigr)-P\bigl(x_{2}^{1/m}\bigr)+iP\bigl(x_{3}^{1/m}\bigr)-iP\bigl(x_{4}^{1/m}\bigr)\in
V+V+V+V\subset U,
\end{align*}
which establishes the continuity of $\Phi$. 
\end{proof}

\begin{theorem}
Let $\mathcal{A}$ be a $C^*$-algebra,
let $X$ be a locally convex space, and
let $P\colon \mathcal{A}\to X$ be a continuous $m$-homogeneous polynomial.
Then the following conditions are equivalent:
\begin{enumerate}
\item[(i)]
there exists a continuous linear map $\Phi\colon\mathcal{A} \to X$ such that $P(x)=\Phi(x^m)$  $(x\in\mathcal{A})$;
\item[(ii)]
the polynomial $P$ is orthogonally additive on $\mathcal{A}_{\textup{sa}}$;
\item[(iii)]
the polynomial $P$ is orthogonally additive on $\mathcal{A}_+$.
\end{enumerate}
If the conditions are satisfied, then the map $\Phi$ is unique.
\end{theorem}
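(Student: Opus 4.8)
The plan is to establish the cycle (i) $\Rightarrow$ (ii) $\Rightarrow$ (iii) $\Rightarrow$ (i) together with uniqueness. The implication (i) $\Rightarrow$ (ii) is exactly Theorem~\ref{1522}(i), and (ii) $\Rightarrow$ (iii) is immediate since $\mathcal{A}_+\subset\mathcal{A}_{\textup{sa}}$. Uniqueness follows at once from Theorem~\ref{1522}(ii): if $\Phi(x^m)=\Psi(x^m)=P(x)$ for all $x$, then reading this at $x=a^{1/m}$ with $a\in\mathcal{A}_+$ gives $\Phi(a)=\Psi(a)$ on $\mathcal{A}_+$, whence $\Phi=\Psi$ by linearity. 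Thus all of the substance lies in the implication (iii) $\Rightarrow$ (i).

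For (iii) $\Rightarrow$ (i) the strategy is to build $\Phi$ first on the positive cone. Define $\Phi_0\colon\mathcal{A}_+\to X$ by $\Phi_0(a)=P(a^{1/m})$. Positive homogeneity $\Phi_0(\lambda a)=\lambda\Phi_0(a)$ for $\lambda\ge 0$ is clear from $m$-homogeneity; the crux is additivity $\Phi_0(a+b)=\Phi_0(a)+\Phi_0(b)$ for $a,b\in\mathcal{A}_+$. Here I would use that $X$ is locally convex, so by Hahn--Banach its continuous dual $X'$ separates the points of $X$; it therefore suffices to verify $\eta\bigl(\Phi_0(a+b)\bigr)=\eta\bigl(\Phi_0(a)\bigr)+\eta\bigl(\Phi_0(b)\bigr)$ for each $\eta\in X'$. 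For a fixed $\eta$, the scalar polynomial $p_\eta:=\eta\circ P$ is continuous, $m$-homogeneous and orthogonally additive on $\mathcal{A}_+$, so the Banach-space-valued (in particular, scalar) representation theorem of~\cite{P} applies and provides a continuous linear functional $\phi_\eta$ on $\mathcal{A}$ with $p_\eta(x)=\phi_\eta(x^m)$. Evaluating this at $x=a^{1/m}$, $x=b^{1/m}$ and $x=(a+b)^{1/m}$ and invoking the linearity of $\phi_\eta$ yields precisely the required scalar identity. (Should~\cite{P} be stated only for orthogonal additivity on $\mathcal{A}_{\textup{sa}}$, I would first upgrade (iii) to (ii): rescaling $x\mapsto sx$, $y\mapsto ty$ in the relation $P(x+y)=P(x)+P(y)$ for positive orthogonal $x\perp y$ and comparing coefficients of the resulting polynomial in $s,t$ forces the associated symmetric $m$-linear map $\varphi$ to vanish whenever its positive arguments are orthogonal and genuinely mixed; expanding $P(x+y)$ for orthogonal $x,y\in\mathcal{A}_{\textup{sa}}$ through their Jordan decompositions then collapses to $P(x)+P(y)$.)

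Once additivity is in hand, $\Phi_0$ extends in the standard way to a real-linear map on $\mathcal{A}_{\textup{sa}}=\mathcal{A}_+-\mathcal{A}_+$ and then to a complex-linear map $\Phi\colon\mathcal{A}\to X$. By construction $\Phi(a^m)=\Phi_0(a^m)=P(a)$ for $a\in\mathcal{A}_+$, so the $m$-homogeneous polynomial $Q:=P-P_\Phi$ (which is algebraically well defined because $\Phi$ is linear) vanishes on the convex set $\mathcal{A}_+$; by Lemma~\ref{l1} it vanishes on the linear span of $\mathcal{A}_+$, which is all of $\mathcal{A}$, so $P(x)=\Phi(x^m)$ for every $x\in\mathcal{A}$. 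Finally, $\mathcal{A}$ is closed under $x\mapsto\vert x\vert$ and under $x\mapsto x^{1/m}$ on its positive part, and $x\mapsto\Phi(x^m)=P(x)$ is continuous, so Lemma~\ref{l12} applied with $\mathcal{R}=\mathcal{A}$ shows that $\Phi$ is continuous. This completes (iii) $\Rightarrow$ (i).

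The main obstacle is the additivity of $\Phi_0$ on the positive cone; everything else is bookkeeping driven by Lemmas~\ref{l1} and~\ref{l12}. The genuinely new difficulty relative to~\cite{P} is that $X$ is only locally convex, so one cannot construct $\Phi$ through a duality argument inside the algebraic bidual (an element of $(X')^{\ast}$ need not come from $X$); instead I keep $\Phi_0(a)=P(a^{1/m})$ honestly $X$-valued and use the functionals $\eta\in X'$ solely to check linear relations, which is exactly the point at which local convexity is indispensable.
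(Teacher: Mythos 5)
Your overall strategy coincides with the paper's: scalarize through the continuous dual (this is exactly where local convexity enters), invoke \cite[Theorem~2.8]{P} for each $\eta\circ P$, reassemble an $X$-valued linear map $\Phi$ whose values are honest values of $P$ at $m$-th roots of positive elements, and finish with Lemma~\ref{l12} for continuity and Theorem~\ref{1522}(ii) for uniqueness. The packaging difference is immaterial: you define $\Phi_0(a)=P(a^{1/m})$ on $\mathcal{A}_+$, extend linearly, and use Lemma~\ref{l1} to propagate the identity $P=P_\Phi$ from $\mathcal{A}_+$ to all of $\mathcal{A}$, whereas the paper writes every element of $\mathcal{A}$ as a finite sum of $m$-th powers, defines $\Phi$ on such sums, and checks well-definedness by the same Hahn--Banach separation argument; both constructions hinge on $X'$ separating the points of $X$ and produce the same map.

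There is, however, a genuine gap at the point where you invoke \cite{P}. Theorem~2.8 of \cite{P} requires orthogonal additivity on $\mathcal{A}_{\textup{sa}}$, not merely on $\mathcal{A}_+$ --- indeed, the first substantive step of the paper's own proof is precisely to upgrade (iii) to additivity of $\eta\circ P$ on $\mathcal{A}_{\textup{sa}}$ before citing that theorem --- so your parenthetical fallback is not optional but essential, and as stated it does not close. The two-parameter rescaling $P(sx+ty)=s^mP(x)+t^mP(y)$ for orthogonal $x,y\in\mathcal{A}_+$ only yields $\varphi(x,\dotsc,x,y,\dotsc,y)=0$, i.e.\ vanishing of mixed terms built from \emph{two} orthogonal positive elements. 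But when you expand $P(x+y)$ for orthogonal $x,y\in\mathcal{A}_{\textup{sa}}$ through the Jordan decompositions $x=x_+-x_-$, $y=y_+-y_-$, the cross terms involve up to four distinct mutually orthogonal positive elements, for instance $\varphi(x_+,x_-,y_+,\dotsc,y_+)$, and these are not covered by the two-element statement. The repair is short: for $s_1,s_2,t_1,t_2\ge 0$ the elements $s_1x_++s_2x_-$ and $t_1y_++t_2y_-$ are positive and mutually orthogonal, so (iii) gives a polynomial identity in four nonnegative variables; comparing coefficients kills every term of $\varphi$ in which both groups $\{x_+,x_-\}$ and $\{y_+,y_-\}$ are represented among the arguments, and the collapse then goes through. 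Alternatively, the paper's device --- the polynomial $f(\alpha,\beta)=P_\eta(x_++\alpha x_-+y_++\beta y_-)-P_\eta(x_++\alpha x_-)-P_\eta(y_++\beta y_-)$ vanishes on $\mathbb{R}^+\times\mathbb{R}^+$, hence on all of $\mathbb{C}^2$, and in particular at $(-1,-1)$ --- achieves the same in one stroke. With this step repaired, the rest of your argument is correct.
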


\begin{proof}
Theorem~\ref{1522} gives (i)$\Rightarrow$(ii), and obviously (ii)$\Rightarrow$(iii).
The task is now to prove that (iii)$\Rightarrow$(i).

Suppose that (iii) holds.
For each continuous linear functional $\eta\colon X\to\mathbb{C}$, set $P_\eta=\eta\circ P$.
Then $P_\eta$ is a complex-valued continuous $m$-homogeneous polynomial. 
We claim that $P_\eta$ is orthogonally additive on $\mathcal{A}_{\textup{sa}}$.
Take $x,y\in\mathcal{A}_{\textup{sa}}$ with $x\perp y$. Then we can write
$x=x_+-x_-$ and $y=y_+-y_-$ with $x_+,x_-,y_+,y_-\in\mathcal{A}_+$ mutually orthogonal.
Define $f\colon\mathbb{C}^2\to\mathbb{C}$ by
\[
f(\alpha,\beta)=
P_\eta(x_++\alpha x_-+y_++\beta y_-)-
P_\eta(x_++\alpha x_-)-P_\eta(y_++\beta y_-)\quad (\alpha,\beta\in\mathbb{C}^2).
\]
Then $f$ is a complex polynomial function in two complex variables.
If $\alpha,\beta\in\mathbb{R}^+$, then $x_++\alpha x_-,y_++\beta y_-\in\mathcal{A}_+$ are mutually orthogonal,
and so, by hypothesis, $P(x_++\alpha x_-+y_++\beta y_-)=P(x_++\alpha x_-)+P(y_++\beta y_-)$.
This shows that $f(\alpha,\beta)=0$. Since $f$ vanishes on 
$\mathbb{R}^{+}\times\mathbb{R}^{+}$, it follows that
$f$ vanishes on $\mathbb{C}^2$, which, in particular, implies
\[
P_\eta(x+y)-P_\eta(x)-P_\eta(y)=f(-1,-1)=0.
\]
Having proved that $P_\eta$ is orthogonally additive on 
$\mathcal{A}_{\textup{sa}}$ we can apply
\cite[Theorem~2.8]{P} to obtain a unique continuous linear functional $\Phi_\eta$ on $\mathcal{A}$
such that
\begin{equation}\label{2928}
\eta\bigl(P(x)\bigr)=\Phi_\eta(x^m)\quad (x\in\mathcal{A}).
\end{equation}

Each $x\in\mathcal{A}$ can be written in the form $x_1^m+\cdots+x_k^m$
for suitable $x_1,\ldots,x_k\in\mathcal{A}$, and we define 
\[
\Phi(x)=\sum_{j=1}^kP(x_j).
\]
Our next goal is to show that $\Phi$ is well-defined.
Suppose that $x_1,\ldots,x_k\in\mathcal{A}$ are such that $x_1^m+\cdots+x_k^m=0$.
For each continuous linear functional $\eta$ on $X$, \eqref{2928} gives
\[
\eta\left(\sum_{j=1}^kP(x_j)\right)=
\sum_{j=1}^k\eta\bigl(P(x_j)\bigr)=
\sum_{j=1}^k\Phi_\eta(x_j^m)=
\Phi_\eta\left(\sum_{j=1}^k x_j^m\right)=
0.
\]
Since $X$ is locally convex, we conclude that $\sum_{j=1}^kP(x_j)=0$.

It is a simple matter to check that $\Phi$ is linear and, by definition, $P(x)=\Phi(x^m)$
$(x\in\mathcal{A})$.
The continuity of $\Phi$ then follows from Lemma \ref{l12}.

The uniqueness of the map $\Phi$ follows from Theorem~\ref{1522}(ii).
\end{proof}

The assumption that the space $X$ be locally convex can be removed by requiring that
the $C^*$-algebra $\mathcal{A}$ be sufficiently rich in projections.
The real rank zero is the most important existence of projections property in the 
theory of $C^*$-algebras.
We refer the reader to \cite[Section V.3.2]{B} and \cite[Section V.7]{DAV} for the basic properties
and examples of $C^*$-algebras of real rank zero.
This class of $C^*$-algebras contains the von Neumann algebras and the
$C^*$-algebras $\mathcal{K}(H)$ of all compact operators on any Hilbert space $H$.
Let us remark that every $C^*$-algebra of real rank zero has an approximate unit of projections
(but not necessarily increasing).

\begin{theorem}\label{1137}
Let $\mathcal{A}$ be a $C^*$-algebra of real rank zero,
let $X$ be a topological linear space, and
let $P\colon \mathcal{A}\to X$ be a continuous $m$-homogeneous polynomial.
Suppose that $\mathcal{A}$  has an increasing approximate unit of projections.
Then the following conditions are equivalent:
\begin{enumerate}
\item[(i)]
there exists a continuous linear map $\Phi\colon\mathcal{A} \to X$ such that $P(x)=\Phi(x^m)$  $(x\in\mathcal{A})$;
\item[(ii)]
the polynomial $P$ is orthogonally additive on $\mathcal{A}_{\textup{sa}}$;
\item[(iii)]
the polynomial $P$ is orthogonally additive on $\mathcal{A}_+$.
\end{enumerate}
If the conditions are satisfied, then the map $\Phi$ is unique.
\end{theorem}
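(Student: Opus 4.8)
The plan is to follow the architecture of the locally convex case just treated: the implications (i)$\Rightarrow$(ii)$\Rightarrow$(iii) are immediate, the first from Theorem~\ref{1522}(i) and the second from the inclusion $\mathcal{A}_+\subseteq\mathcal{A}_{\textup{sa}}$, so all the content lies in (iii)$\Rightarrow$(i). The difference is that the continuous functionals $\eta$ on $X$ that carried the previous proof may now fail to separate points, and their role must be taken over by the projections that real rank zero provides.

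First I would build a candidate $\Phi$ on the positive cone by setting $\Phi(a)=P(a^{1/m})$ for $a\in\mathcal{A}_+$, the root being taken in the continuous functional calculus. This map is continuous and, by the $m$-homogeneity of $P$, positively homogeneous. Orthogonal additivity gives $\Phi(a+b)=\Phi(a)+\Phi(b)$ when $a\perp b$, since then $a^{1/m}\perp b^{1/m}$; and for a single element with finite spectrum, $a=\sum_i\lambda_i p_i$ with the $p_i$ its orthogonal spectral projections, one obtains the canonical expression $\Phi(a)=\sum_i\lambda_i P(p_i)$. A simultaneous spectral decomposition then yields additivity of $\Phi$ on every pair of commuting positive elements of finite spectrum, and hence, by continuity, on all commuting pairs. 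Granting additivity on all of $\mathcal{A}_+$, I would extend $\Phi$ to a complex-linear map on $\mathcal{A}$, verify $P(x)=\Phi(x^m)$ by splitting $x$ into its four positive parts and using a root $\omega$ of $-1$ exactly as in the proof of Theorem~\ref{1522}, deduce continuity of $\Phi$ from Lemma~\ref{l12} applied with $\mathcal{R}=\mathcal{A}$, and read off uniqueness from Theorem~\ref{1522}(ii).

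Everything thus reduces to additivity of $\Phi$ on non-commuting positive elements. The clean mechanism here is a finite-dimensional reduction: whenever the relevant elements lie in a finite-dimensional $\ast$-subalgebra $\mathcal{B}\subseteq\mathcal{A}$, the set $P(\mathcal{B})$ spans a finite-dimensional, hence locally convex, subspace $F\subseteq X$, so the locally convex case already proved applies to the corestriction $P|_{\mathcal{B}}\colon\mathcal{B}\to F$ and yields a continuous linear $\Phi_{\mathcal{B}}$ with $P=\Phi_{\mathcal{B}}\circ(\,\cdot\,)^m$ on $\mathcal{B}$; by the uniqueness in Theorem~\ref{1522}(ii) the maps $\Phi_{\mathcal{B}}$ are mutually consistent and satisfy $\Phi_{\mathcal{B}}(p)=P(p)$ on projections, so $\Phi$ agrees with a genuine linear map on each finite-dimensional $\ast$-subalgebra. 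Real rank zero makes the finite-spectrum elements norm-dense, and the increasing approximate unit of projections reduces matters to the unital corners $e_\lambda\mathcal{A}e_\lambda$; when these corners are finite-dimensional, as happens for $\mathcal{K}(H)$, this closes the argument after a passage to the limit.

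The main obstacle is that in general two projections need not lie in a common finite-dimensional subalgebra, so neither the finite-dimensional reduction nor the approximate unit settles the non-commutative core of the problem on its own. To handle it I would work directly with the symmetric $m$-linear map $\varphi$ attached to $P$: orthogonal additivity forces the locality property that $\varphi(a_1,\dots,a_m)=0$ whenever the positive arguments split into two nonempty mutually orthogonal blocks, and the heart of the matter is to deduce from this locality, using only norm-continuity and the wealth of spectral projections furnished by real rank zero, that $\varphi(x_1,\dots,x_m)$ factors linearly through the symmetric product of its arguments. This functional-free locality argument is exactly what replaces the appeal to \cite[Theorem~2.8]{P} through scalar functionals made in the locally convex case, and it is where I expect the real work to be; once it delivers additivity of $\Phi$ on $\mathcal{A}_+$, the remaining steps listed above are routine.
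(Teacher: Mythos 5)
Your outer reductions are sound — in particular the finite-dimensional corestriction trick is correct (the span of $P(\mathcal{B})$ is finite-dimensional, hence locally convex and closed in $X$, so the locally convex theorem applies to $P|_{\mathcal{B}}$) — but the proposal has a genuine gap at exactly the point you flag yourself: you never prove that $\Phi(a)=P(a^{1/m})$ is additive on non-commuting pairs of positive elements, i.e.\ that $\varphi$ ``factors linearly through the symmetric product''. That statement is the entire content of (iii)$\Rightarrow$(i); everything else you list is routine, so announcing that this is ``where I expect the real work to be'' leaves the theorem unproved. In addition, one of your intermediate steps is already shaky: to pass from commuting finite-spectrum pairs to arbitrary commuting pairs ``by continuity'' you need to approximate a commuting pair of positive elements by a \emph{commuting} pair of finite-spectrum elements, and real rank zero does not obviously give this, since real rank zero does not pass to the (non-hereditary) commutative subalgebra $C^*(a,b)$.

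The paper sidesteps your hard step entirely by a different construction, and this is the idea you are missing: in the unital case one does not define $\Phi$ on the positive cone via roots at all; one sets $\Phi(x)=\varphi(x,1,\dots,1)$, where $\varphi$ is the symmetric $m$-linear map of $P$. This $\Phi$ is linear and continuous for free, and the only thing to verify is $P(x)=\Phi(x^m)$. By Lemma~\ref{l1} it suffices to check this on $\mathcal{A}_{\textup{sa}}$; by real rank zero and continuity it suffices to check it on self-adjoint $x=\sum_{j}\rho_j e_j$ with finite spectrum; and there it follows from the claim that $\varphi(e_{j_1},\dots,e_{j_m})=0$ whenever the mutually orthogonal projections involved (including $e_0=1-\sum_j e_j$, which again uses the unit) are not all equal. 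That claim is precisely your ``locality property'', and it is proved by the same scalar polynomial device as in Lemma~\ref{l1}: compose with an \emph{arbitrary}, not necessarily continuous, linear functional $\eta$ — arbitrary linear functionals separate points of any linear space, so no local convexity is needed — note that the resulting polynomial function vanishes on $(\mathbb{R}^+)^m$ and hence identically, and extract the coefficient of $\alpha_1\cdots\alpha_m$. The non-unital case is then glued from the unital one: apply it to the corners $\mathcal{A}_\lambda=e_\lambda\mathcal{A}e_\lambda$ (unital and hereditary, hence of real rank zero), use the uniqueness assertion to see that the maps $\Phi_\lambda$ are consistent on $\mathcal{R}=\bigcup_\lambda\mathcal{A}_\lambda$, obtain continuity from Lemma~\ref{l12}, extend by density to $\mathcal{A}$, and check that the extension maps into $X$ (rather than the completion of $X$) via $\Phi(x)=P(x^{1/m})$ for $x\in\mathcal{A}_+$.
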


\begin{proof}
Theorem~\ref{1522} gives (i)$\Rightarrow$(ii), and 
it is clear that (ii)$\Rightarrow$(iii).
We will henceforth prove that (iii)$\Rightarrow$(i).

We first note that such a map $\Phi$ is necessarily unique, because of Theorem~\ref{1522}(ii).

Suppose that (iii) holds and that $\mathcal{A}$ is unital. 
Let $\varphi\colon\mathcal{A}^m\to X$ be the symmetric $m$-linear map associated with $P$
and define $\Phi\colon\mathcal{A}\to X$ by
\[
\Phi(x)=\varphi(x,1,\ldots,1)\quad (x\in\mathcal{A}).
\]
Let $Q\colon\mathcal{A}\to X$ be the $m$-homogeneous polynomial defined by
\[
Q(x)=\Phi(x^m)\quad(x\in\mathcal{A}).
\]
We will prove that $P=Q$. On account of Lemma \ref{l1}, it suffices to show that $P(x)=Q(x)$ 
for each $x\in\mathcal{A}_{\textup{sa}}$.

First, consider the case where $x\in\mathcal{A}_{\textup{sa}}$ has finite spectrum, 
say $\{\rho_1,\ldots,\rho_k\}\subset\mathbb{R}$. This implies that $x$
can be written in the form
\[
x=\sum_{j=1}^k\rho_j e_j,
\]
where $e_1,\ldots,e_k\in\mathcal{A}$ are mutually orthogonal projections 
(specifically, the projection $e_j$ is defined by using the continuous functional calculus for $x$ by
$e_j=\chi_{\{\rho_j\}}(x)$ for each $j\in\{1,\ldots,k\}$). We also set $e_0=1-(e_1+\cdots+e_k)$, so that
the projections $e_0,e_1,\ldots,e_k$ are mutually orthogonal, and $\rho_0=0$.
We claim that if $j_1,\ldots,j_m\in\{0,\ldots,k\}$ and $j_l\ne j_{l'}$ for some $l,l'\in\{1,\ldots,m\}$, then
\begin{equation}\label{1035}
\varphi(e_{j_1},\ldots,e_{j_m})=0.
\end{equation}
Let $\Lambda_1=\bigr\{n\in\{1,\ldots,m\} : j_n=j_l\bigr\}$ 
and $\Lambda_2=\bigr\{n\in\{1,\ldots,m\} : j_n\ne j_l\bigr\}$.
For each $\alpha_1,\ldots,\alpha_m\in\mathbb{R}^+$,
the elements $\sum_{n\in\Lambda_1}\alpha_{n}e_{j_n}$
and $\sum_{n\in\Lambda_2}\alpha_{n}e_{j_n}$ are positive and mutually orthogonal,
so that the orthogonal additivity of $P$ on $\mathcal{A}_+$ gives
\[
P\left(
\sum_{n=1}^m\alpha_ne_{j_n}
\right)=
P\left(\sum_{n\in\Lambda_1}\alpha_n e_{j_n}\right)+
P\left(\sum_{n\in\Lambda_2}\alpha_n e_{j_n}\right).
\]
This implies that, for each linear functional $\eta\colon X\to\mathbb{C}$, 
the function $f\colon\mathbb{C}^m\to\mathbb{C}$  defined by
\[
f(\alpha_1,\ldots,\alpha_m)=
\eta\left(
P\Bigl(\sum_{n=1}^m\alpha_n e_{j_n}\Bigr)-
P\Bigl(\sum_{n\in\Lambda_1}\alpha_n e_{j_n}\Bigr)-
P\Bigl(\sum_{n\in\Lambda_2}\alpha_n e_{j_n}\Bigr)
\right),
\]
for all $\alpha_1,\ldots,\alpha_m\in\mathbb{C}$,
is a complex polynomial function in $m$ complex variables vanishing in 
$\bigl(\mathbb{R}^+\bigr)^m$.
Therefore $f$ vanishes on $\mathbb{C}^m$.
Moreover, we observe that the coefficient of the monomial $\alpha_1\cdots\alpha_m$ is given by 
$n!\eta\bigl(\varphi(e_{j_1},\ldots,e_{j_m})\bigr)$,
because both $\Lambda_1$ and $\Lambda_2$ are different from $\{1,\ldots,m\}$.
We thus get 
\[n!\eta\bigl(\varphi(e_{j_1},\ldots,e_{j_m})\bigr)=0 .\]
Since this identity holds for each linear functional $\eta$, our claim follows.
Property \eqref{1035} now leads to
\begin{equation*}
\begin{split}
P(x) 
& = 
\varphi\left(\sum_{j=1}^k\rho_{j} e_j,\ldots,\sum_{j=1}^k\rho_{j}e_j\right) =
\sum_{j_1, \ldots, j_m =1}^k\rho_{j_{1}}\cdots\rho_{j_{m}}\varphi\left(e_{j_1},\ldots, e_{j_m}\right) \\
&= 
\sum_{j=1}^{k} \rho_{j}^m\varphi\left(e_j,\ldots,e_j\right)
\end{split}
\end{equation*}
and
\begin{equation*}
\begin{split}
Q(x) 
&= 
\varphi\left(\Bigl(\sum_{j=0}^k\rho_j e_j \Bigr)^m,\sum_{j=0}^k e_j,\ldots, \sum_{j=0}^k e_j\right)
= 
\varphi\left( \sum_{j=0}^k \rho_{j}^m e_j , \sum_{j=0}^k e_j,\ldots,\sum_{j=0}^k e_j \right) \\
&= 
\sum_{j_1,\ldots, j_m =0}^k \rho_{j_1}^m \varphi\left(e_{j_1},\ldots ,e_{j_m}\right)
= 
\sum_{j=1}^k \rho_{j}^m \varphi\left( e_j , \ldots ,e_j \right).
\end{split}
\end{equation*}
We thus get $P(x)=Q(x)$.

Now suppose that $x\in\mathcal{A}_{\textup{sa}}$ is an arbitrary element.
Since $\mathcal{A}$ has real rank zero, it follows that there exists a sequence $(x_n)$ in 
$\mathcal{A}_{\textup{sa}}$ such that each $x_n$ has finite spectrum and $\lim x_n=x$.
On account of the above case, we have $P(x_n)=Q(x_n)$ ($n\in\mathbb{N}$), and 
the continuity of both $P$ and $Q$ now yields $P(x)=\lim P(x_n)=\lim Q(x_n)=Q(x)$,
as required.

We are now in a position to prove the non-unital case. 
By hypothesis, there exists an increasing approximate unit of projections $(e_\lambda)_{\lambda\in\Lambda}$.
For each $\lambda\in\Lambda$, set $\mathcal{A}_\lambda=e_\lambda\mathcal{A}e_\lambda$.
Then $\mathcal{A}_\lambda$ is a unital $C^*$-algebra (with identity $e_\lambda$) and
has real rank zero (because $\mathcal{A}_\lambda$ is a hereditary $C^*$-subalgebra of $\mathcal{A}$).
From what has previously been proved, it follows that there exists a unique continuous linear map
$\Phi_\lambda\colon\mathcal{A}_\lambda\to X$ such that
\begin{equation}\label{e18}
P(x)=\Phi_\lambda(x^m)\quad (x\in\mathcal{A}_\lambda). 
\end{equation}
Define
\[
\mathcal{R}=\bigcup_{\lambda\in\Lambda}\mathcal{A}_\lambda
\]
and, for each $x\in\mathcal{R}$, set
\[
\Phi(x)=\Phi_\lambda(x),
\]
where $\lambda\in\Lambda$ is such that $x\in\mathcal{A}_\lambda$.
We will show that $\Phi$ is well-defined. Suppose $\lambda,\mu\in\Lambda$ are such that
$x\in\mathcal{A}_\lambda\cap\mathcal{A}_\mu$. Then there exists $\nu\in\Lambda$ with
$\lambda,\mu\le\nu$. Since the net $(e_\lambda)_{\lambda\in\Lambda}$ is increasing, we see that $e_\lambda,e_\mu\le e_\nu$
and therefore $\mathcal{A}_\lambda,\mathcal{A}_\mu\subset\mathcal{A}_\nu$. The uniqueness of the
representation of $P$ on both $\mathcal{A}_\lambda$ and $\mathcal{A}_\mu$ implies that
$\Phi_\nu\mid_{\mathcal{A}_\lambda}=\Phi_\lambda$ and $\Phi_\nu\mid_{\mathcal{A}_\mu}=\Phi_\mu$,
which implies that $\Phi_\lambda(x)=\Phi_\nu(x)=\Phi_\mu(x)$.
We now show that $\mathcal{R}$ is a $\ast$-subalgebra of $\mathcal{A}$ and that $\Phi$ is linear.
Take $x,y\in\mathcal{R}$ and $\alpha,\beta\in\mathbb{C}$. We take $\lambda,\mu\in\Lambda$
such that $x\in\mathcal{A}_{\lambda}$ and $y\in\mathcal{A}_\mu$. 
Then $x^*\in\mathcal{A}_\lambda\subset\mathcal{R}$. Now set $\nu\in\Lambda$ with
$\lambda,\mu\le\nu$. Hence $x,y\in\mathcal{A}_\nu$, so that
$\alpha x+\beta y,xy\in\mathcal{A}_\nu\subset\mathcal{R}$, which shows that $\mathcal{R}$ is a
subalgebra of $\mathcal{A}$. Further, we have
\[
\Phi(\alpha x+\beta y)=\Phi_\nu(\alpha x+\beta y)=
\alpha\Phi_\nu(x)+\beta\Phi_\nu(y)=\alpha \Phi(x)+\beta\Phi(y),
\]
which shows that $\Phi$ is linear.

From \eqref{e18} we deduce  that $P(x)=\Phi(x^m)$ for each $x\in\mathcal{R}$.

Our next goal is to show that $\mathcal{R}$ satisfies the conditions of Lemma \ref{l12}.
If $x\in\mathcal{R}_{\textup{sa}}$ ($x\in\mathcal{R}_+$), 
then there exists $\lambda\in\Lambda$ such that $x\in\bigl(\mathcal{A}_\lambda\bigr)_{\textup{sa}}$
($x\in\bigl(\mathcal{A}_\lambda\bigr)_{+}$, respectively)
and therefore $\vert x\vert\in\mathcal{A}_\lambda\subset\mathcal{R}$ ($x^{1/m}\in \mathcal{A}_\lambda\subset\mathcal{R}$,
respectively). Since the polynomial $P\mid_\mathcal{R}$ is continuous, Lemma \ref{l12}
shows that the map $\Phi$ is continuous.

Since $(e_\lambda)_{\lambda\in\Lambda}$ is an approximate unit, it follows that $\mathcal{R}$ is dense in $\mathcal{A}$,
and hence that the map $\Phi$ extends uniquely to a continuous linear map from $\mathcal{A}$ into
the completion of $X$. By abuse of notation we continue to write $\Phi$ for this extension.
Since both $P$ and $\Phi$ are continuous, it may be concluded that
$P(x)=\Phi(x^m)$ for each $x\in\mathcal{A}$.
We next prove that the image of $\Phi$ is actually contained in $X$.
Of course, it suffices to show that $\Phi$ takes $\mathcal{A}_+$ into $X$.
If $x\in\mathcal{A}_+$, then
\[
\Phi(x)=
\Phi\bigl(\bigl(x^{1/m}\bigr)^m\bigr)=
P\bigl(x^{1/m}\bigr)\in X,
\]
as required.
\end{proof}

Since every von Neumann algebra is unital and has real rank zero,
Theorem~\ref{1137} applies in this setting and gives the following.

\begin{corollary}\label{1523}
Let $\mathcal{M}$ be a von Neumann algebra,
let $X$ be a topological linear space, and
let $P\colon \mathcal{M}\to X$ be a continuous $m$-homogeneous polynomial.
Then the following conditions are equivalent:
\begin{enumerate}
\item[(i)]
there exists a continuous linear map $\Phi\colon\mathcal{M} \to X$ such that $P(x)=\Phi(x^m)$  $(x\in\mathcal{M})$;
\item[(ii)]
the polynomial $P$ is orthogonally additive on $\mathcal{M}_{\textup{sa}}$;
\item[(iii)]
the polynomial $P$ is orthogonally additive on $\mathcal{M}_+$.
\end{enumerate}
If the conditions are satisfied, then the map $\Phi$ is unique.
\end{corollary}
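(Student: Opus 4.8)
The plan is simply to verify that an arbitrary von Neumann algebra $\mathcal{M}$ meets every hypothesis imposed on the $C^*$-algebra $\mathcal{A}$ in Theorem~\ref{1137}, and then to quote that theorem directly. No new argument is needed beyond these verifications, so I expect no genuine obstacle here; the work is entirely bookkeeping about the structure of $\mathcal{M}$.

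First I would recall that every von Neumann algebra is a unital $C^*$-algebra, the identity operator $1$ serving as the unit; in particular $1$ is a projection. This already disposes of one of the two projection hypotheses of Theorem~\ref{1137}: since $\mathcal{M}$ is unital, the constant net whose single term is $1$ is trivially an increasing approximate unit consisting of projections. For the real rank zero hypothesis I would invoke the standard fact, recorded in the paragraph preceding Theorem~\ref{1137} and in \cite[Section V.3.2]{B} and \cite[Section V.7]{DAV}, that the class of $C^*$-algebras of real rank zero contains all von Neumann algebras. If one wished to make this self-contained, it suffices to note that for $x\in\mathcal{M}_{\textup{sa}}$ with spectral measure $E$, the finite spectral sums obtained by partitioning the spectrum of $x$ into small Borel pieces converge to $x$ in operator norm and are self-adjoint elements of finite spectrum, which is exactly the real rank zero condition.

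With these two points in place, Theorem~\ref{1137} applies verbatim to $\mathcal{A}=\mathcal{M}$ and yields at once the equivalence of conditions (i), (ii), and (iii), together with the uniqueness of the map $\Phi$, which as remarked there is a consequence of Theorem~\ref{1522}(ii). Thus the corollary follows immediately, and the only step meriting comment is the real rank zero property, which for von Neumann algebras is classical.
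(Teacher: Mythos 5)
Your proposal is correct and follows exactly the paper's own route: the paper deduces the corollary from Theorem~\ref{1137} by noting precisely that every von Neumann algebra is unital (so the unit itself serves as an increasing approximate unit of projections) and has real rank zero. Your added sketch of the real rank zero property via finite spectral sums is a valid justification of that classical fact, but otherwise there is nothing to add.
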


\begin{proposition}\label{1525}
Let $H$ be a Hilbert space with $\dim H \geq 2$,
let $X$ be a topological linear space, and 
let $P\colon\mathcal{B}(H) \to X$ be a continuous $m$-homogeneous polynomial.
Suppose that $P$ is orthogonally additive in $\mathcal{B}(H)$.
Then $P=0$.
\end{proposition}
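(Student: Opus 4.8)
The plan is to reduce the statement to the representation theorem already obtained and then to exploit the orthogonality of genuinely \emph{non}-self-adjoint elements, which is the feature distinguishing this hypothesis from mere orthogonal additivity on $\mathcal{B}(H)_{\textup{sa}}$. Since $\mathcal{B}(H)$ is a von Neumann algebra and $P$ is in particular orthogonally additive on $\mathcal{B}(H)_{\textup{sa}}$, Corollary~\ref{1523} provides a unique continuous linear map $\Phi\colon\mathcal{B}(H)\to X$ with $P(x)=\Phi(x^m)$; as $P=P_\Phi$, it suffices to prove that $\Phi=0$. By Lemma~\ref{l1} it is enough to show that $P$ vanishes on the convex set $\mathcal{B}(H)_{\textup{sa}}$, whose linear span is $\mathcal{B}(H)$. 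By the spectral theorem the self-adjoint elements of finite spectrum are dense in $\mathcal{B}(H)_{\textup{sa}}$, and for such an element $x=\sum_j\rho_j e_j$, with the $e_j$ mutually orthogonal projections, orthogonal additivity on $\mathcal{B}(H)_{\textup{sa}}$ gives $P(x)=\sum_j\rho_j^m P(e_j)$. Hence, by continuity, everything reduces to proving $P(e)=\Phi(e)=0$ for every projection $e$.

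The key mechanism is the following. If $v$ is a partial isometry whose initial and final projections $p=v^*v$ and $q=vv^*$ are orthogonal, then $v^2=0$ and $v\perp v^*$, and the same remains true after multiplying $v$ and $v^*$ by arbitrary scalars; the unital $*$-subalgebra generated by $p,q,v,v^*$ is a copy of $M_2(\mathbb{C})$, with unit $p+q$, in which $p,q,v,v^*$ play the role of the matrix units. Inside this corner I would run one-parameter families of orthogonal rank-one decompositions: taking a ``rotation'' family $x_\theta\perp y_\theta$ with $x_\theta+y_\theta$ unitary, orthogonal additivity forces an $X$-valued trigonometric expression in $\theta$ to vanish identically, and comparing its independent components in $\theta$ yields $\Phi(p+q)=0$ together with $\Phi(v)=\Phi(v^*)$. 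Conjugating the whole configuration by the diagonal unitary $p+iq$ (equivalently, inserting a phase) produces the complementary relation $\Phi(v)=-\Phi(v^*)$, so that $\Phi(p+q)=0$ and $\Phi(v)=\Phi(v^*)=0$, uniformly in $m$.

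From $\Phi(p+q)=0$ for every pair of orthogonal Murray--von Neumann equivalent projections one obtains $\Phi(e)=0$ for every projection $e$ that splits as a sum of two orthogonal equivalent subprojections; this covers all projections of infinite rank (split the range into two infinite-dimensional halves) and all projections of even finite rank. Writing an odd-rank projection as one of these plus a single rank-one projection and using the linearity of $\Phi$, the problem collapses to showing $\Phi(\pi_\xi)=0$ for a rank-one projection $\pi_\xi$. When $\dim H\ge 3$ this is immediate: choosing unit vectors with $\xi,\xi_2,\xi_3$ orthonormal, the three quantities $\Phi(\pi_\xi+\pi_{\xi_2})$, $\Phi(\pi_\xi+\pi_{\xi_3})$, $\Phi(\pi_{\xi_2}+\pi_{\xi_3})$ all vanish, and their alternating sum equals $2\Phi(\pi_\xi)$.

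The remaining and hardest case is $\dim H=2$, i.e. $\mathcal{B}(H)=M_2(\mathbb{C})$, where there is no room for a third orthogonal rank-one projection and the previous trick is unavailable. After the relations above, $\Phi$ is forced to be a scalar multiple of the functional $z\mapsto\traza(Dz)$ with $D=\mathrm{diag}(1,-1)$, so it remains only to rule out this one surviving possibility; equivalently, one must show that the $m$-homogeneous polynomial $z\mapsto\traza(Dz^m)$ is \emph{not} orthogonally additive on $M_2(\mathbb{C})$. I expect this to be the main obstacle. The point is that every orthogonal decomposition built from \emph{real} data contributes trivially, so one is forced to exhibit an explicit orthogonal pair $x\perp y$ with complex entries for which $(x+y)^m-x^m-y^m$ has unequal diagonal entries, i.e. $\traza\bigl(D\bigl[(x+y)^m-x^m-y^m\bigr]\bigr)\ne 0$, which then forces the last coefficient of $\Phi$ to vanish as well. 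This computation is delicate and genuinely parity-sensitive (it is easier for odd $m$, where already the square-zero elements $v$ with $v^2=0$ carry enough information), and it constitutes the technical heart of the argument.
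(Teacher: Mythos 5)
Your proposal has a genuine gap, and you name it yourself: the case $\dim H=2$ is never proved. Everything before that point is sound. The reduction to projections (via Corollary~\ref{1523}, Lemma~\ref{l1}, spectral approximation and continuity) is correct; the rotation-family mechanism, once written out, does yield $\Phi(p+q)=0$ and, with phases, $\Phi(v)=\Phi(v^*)=0$ for every partial isometry $v$ with $v^*v=p$, $vv^*=q$ orthogonal (and extracting coefficients from a vector-valued trigonometric identity is legitimate even when $X$ is not locally convex: evaluate at finitely many points and invert a scalar matrix); and the alternating-sum trick with three orthogonal rank-one projections settles $\dim H\ge 3$. But for $M_2(\mathbb{C})$ you stop at ``one must show that $z\mapsto\traza(Dz^m)$ is not orthogonally additive,'' declare it the technical heart, and do not carry it out. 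Since the proposition is asserted for all $\dim H\ge 2$, the proof is incomplete as written.

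The irony is that the missing case follows from relations you already have; you only applied them to the standard matrix units, which is why $D=\mathrm{diag}(1,-1)$ appeared to survive. Your relations hold for \emph{every} orthonormal pair of vectors, not just $e_1,e_2$. Take $\xi=\tfrac{1}{\sqrt2}(e_1+e_2)$, $\eta=\tfrac{1}{\sqrt2}(e_1-e_2)$ and let $w=\eta\otimes\xi$ (paper's notation), so that $we_1=we_2=\tfrac12(e_1-e_2)$; then $w^2=0$, $w^*w=\xi\otimes\xi$ and $ww^*=\eta\otimes\eta$ are orthogonal rank-one projections, and a direct check gives $w+w^*=D$. Your relations applied to this configuration give $\Phi(w)=\Phi(w^*)=0$, hence $\Phi(D)=0$; combined with $\Phi(1)=\Phi(E_{12})=\Phi(E_{21})=0$ this forces $\Phi=0$ on $M_2(\mathbb{C})$ with no parity-sensitive computation at all (equivalently: square-zero rank-one partial isometries span the trace-zero matrices). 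For comparison, the paper never meets this dichotomy: for an orthogonal equivalent pair $e\sim e'$ with partial isometry $u$ ($u^*u=e$, $uu^*=e'$, so $u^2=0$ and $u\perp u^*$), it composes $P$ with the unitaries $v=1+u+u^*-e-e'$ and $v_\omega=1+\omega u+u^*-e-e'$, where $\omega^m=-1$, applies Corollary~\ref{1523} to each rotated polynomial to get $P(v u+v u^*)=P(v_\omega u+v_\omega u^*)=0$, and computes $vu+vu^*=e+e'$, $v_\omega u+v_\omega u^*=e+\omega e'$; this yields $P(e)+P(e')=0$ \emph{and} $P(e)-P(e')=0$, hence $P(e)=P(e')=0$ individually. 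That kills every projection (even or odd rank, finite or infinite) uniformly in all dimensions $\ge 2$, which is exactly the step whose absence drove you into the $M_2(\mathbb{C})$ impasse.
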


\begin{proof}
For each unitary $v\in\mathcal{B}(H)$, the map $P_v\colon\mathcal{B}(H)\to X$ defined by
\[
P_v(x)=P(vx)\quad (x\in\mathcal{B}(H))
\]
is easily seen to be a continuous $m$-homogeneous polynomial that is orthogonally additive on $\mathcal{B}(H)$.
In particular, $P_v$ is orthogonally additive on $\mathcal{B}(H)_{\textup{sa}}$, and Corollary~\ref{1523} then
gives a unique continuous linear map $\Phi_v\colon\mathcal{B}(H)\to X$ such that
\[
P(vx)=\Phi_v(x^m)\quad (x\in\mathcal{B}(H)).
\]

We claim that, 
if $e,e'\in\mathcal{B}(H)$ are equivalent projections with $e\perp e'$, 
then $P(e)=P(e')=0$.
Let $u\in\mathcal{B}(H)$ be a partial isometry such that $u^*u=e$ and $uu^*=e'$.
Then
\[
\left\Vert u^2 \right\Vert^4=
\left\Vert (u^2)^*u^2 \right\Vert^2=
\left\Vert \left((u^2)^*u^2\right)^2\right\Vert=
\left\Vert u^{*}ee^{\prime} eu \right\Vert=0,
\]
which gives $u^2=0$.
From this we see that $u\perp u^*$, and therefore
\begin{equation}\label{1630}
P(vu+vu^*)=
P_v(u+u^*)=P_v(u)+P_v(u^*)=\Phi_v\left(u^m\right)+\Phi_v\left((u^*)^m\right)=0.
\end{equation}
We now take $\omega\in\mathbb{C}$ with $\omega^m=-1$, and define
\begin{equation*}
\begin{split}
v & =
1+u+u^*-e-e',\\
v_\omega
& = 
1+\omega u+u^*-e-e'.
\end{split}
\end{equation*}
It is immediately seen that both $v$ and $v_\omega$ are unitary, and so applying \eqref{1630} 
(and using the orthogonal additivity of $P$ and that $e\perp e'$), we see that
\begin{align*}
0 & =P(vu+vu^*)=P(e+e')=P(e)+P(e'),\\
0 & =P(v_\omega u+v_\omega u^*)=P(e+\omega e')=P(e)+P(\omega e')=P(e)-P(e').
\end{align*}
By comparing both identities, we conclude that $P(e)=P(e')=0$, as claimed.

Our next objective is to prove that $P(e)=0$ for each projection $e\in\mathcal{B}(H)$.
Suppose that $e\in\mathcal{B}(H)$ is a rank-one projection. 
Since $\dim H\ge 2$, it follows that there exists an equivalent projection $e'$ such that
$e'\perp e$. 
Then it follows from the above claim that $P(e)=0$.
Let $e\in\mathcal{B}(H)$ be a finite projection. Then there exist mutually orthogonal projections
$e_1,\ldots,e_n$ such that $e_1+\cdots+e_n=e$. Using the preceding observation and the orthogonal
additivity of $P$ we get
$P(e)=P(e_1)+\cdots+P(e_n)=0$.
We now assume that $e\in\mathcal{B}(H)$ is an infinite projection.
Then there exist mutually orthogonal, equivalent projections $e_1$ and $e_2$ such that $e_1+e_2=e$.
By the claim, 
we have $P(e)=P(e_1)+P(e_2)=0$.

We finally proceed to show that $P=0$.
By Lemma \ref{l1}, it suffices to show that $P(x)=0$ for each $x\in\mathcal{B}(H)_+$.
Suppose that $x\in\mathcal{B}(H)_+$  can be written in the form
$x=\sum_{j=1}^k\rho_j e_j$,
where $e_1,\ldots,e_k\in\mathcal{B}(H)$ are mutually orthogonal projections and
$\rho_1,\ldots,\rho_k\in\mathbb{R}^+$. Then we have
$P(x)=\sum_{j=1}^k{\rho_j}^mP(e_j)=0$.
Now let $x\in\mathcal{B}(H)_{+}$ be an arbitrary element.
From the spectral decomposition we deduce  that there exists a sequence $(x_n)$ in $\mathcal{B}(H)_{+}$
such that each $x_n$ is a positive linear combination of mutually orthogonal projections and $\lim x_n=x$.
On account of the preceding observation, $P(x_n)=0$ ($n\in\mathbb{N}$), and 
the continuity of $P$ implies that
$P(x)=\lim P(x_n)=0$, as required.
\end{proof}

\section{Non-commutative $L^p$-spaces}

Before giving the next results we make the following preliminary remarks.

A fundamental fact for us is the behaviour of the product of $L^0(\mathcal{M},\tau)$ when restricted to the $L^p$-spaces.
Specifically, if $0<p,q,r\le\infty$ are such that $\tfrac{1}{p}+\tfrac{1}{q}=\tfrac{1}{r}$,
then the H\"older inequality states that
\begin{equation}\label{1723}
x\in L^p(\mathcal{M},\tau), \
y\in L^q(\mathcal{M},\tau)  \
\Rightarrow \
xy\in L^r(\mathcal{M},\tau) \text{ and }
\Vert xy\Vert_r\le\Vert x\Vert_p\Vert y\Vert_q.
\end{equation}

Suppose that $x,y\in L^p(\mathcal{M},\tau)_+$, $0<p<\infty$, are mutually orthogonal and that 
$\omega\in\mathbb{C}$ with $\vert\omega\vert=1$. Then it is immediately seen that
$\vert x+\omega y\vert=x+y$, and it follows, by considering the spectral resolutions of $x$, $y$,
and $x+y$, that $(x+y)^p=x^p+y^p$. Hence
\begin{equation}\label{1150}
\Vert x+\omega y\Vert_p^p=\Vert x\Vert_p^p+\Vert y\Vert_p^p.
\end{equation}
Each $x\in L^{p}(\mathcal{M},\tau)$ can be written in the form
\begin{equation}\label{1813}
\begin{split}
x=x_1-x_2+i(x_3-x_4), \text{ with } 
&x_1,x_2,x_3,x_4\in L^{p}(\mathcal{M},\tau)_+, \\
&x_1\perp x_2, \ x_3\perp x_4,\\
&\Vert x_1\Vert_p^p+\Vert x_2\Vert_p^p=\Vert x_1-x_2\Vert_p^p\le\Vert x\Vert_p^p,\\
&\Vert x_3\Vert_p^p+\Vert x_4\Vert_p^p=\Vert x_3-x_4\Vert_p^p\le\Vert x\Vert_p^p.
\end{split}
\end{equation}
Indeed, first we write $x=\Re x+i\Im x$, where
\[
\Re x=\frac{1}{2}(x^*+x), \, \Im x=\frac{i}{2}(x^*-x)\in L^{p}(\mathcal{M},\tau)_{\textup{sa}},
\]
and, since $\Vert x^*\Vert_{p}=\Vert x\Vert_{p}$, it follows that
$\Vert \Re x\Vert_{p},\Vert\Im x\Vert_{p}\le\Vert x\Vert_{p}$.
Further, we take the positive operators
\begin{equation*}
x_1
=\tfrac{1}{2}\left(\vert \Re x\vert+\Re x \right),\
x_2
=\tfrac{1}{2} \left(\vert \Re x\vert-\Re x \right),\
x_3
=\tfrac{1}{2} \left(\vert \Im x\vert+\Im x \right),\
x_4
=\tfrac{1}{2} \left(\vert \Im x\vert-\Im x \right).
\end{equation*}
Then $x_1,x_2,x_3,x_4\in L^p(\mathcal{M},\tau)$,
$\Re x=x_1-x_2$ with $x_1\perp x_2$, so that \eqref{1150} gives
\[
\Vert\Re x\Vert_p^p=\Vert x_1\Vert_p^p+\Vert x_2\Vert_p^p,
\]
and $\Im x= x_3-x_4$ with $x_3\perp x_4$, so that \eqref{1150} gives
\[
\Vert\Im x\Vert_p^p=\Vert x_3\Vert_p^p+\Vert x_4\Vert_p^p.
\]

\begin{theorem}\label{1242}
Let $\mathcal{M}$ be a von Neumann algebra with a normal semifinite faithful trace $\tau$,
let $X$ be a topological linear space, and
let $\Phi\colon L^{p/m}(\mathcal{M},\tau)\to X$ be a continuous linear map with $0<p<\infty$.
Then: 
\begin{enumerate}
\item[(i)]
the map $P_\Phi\colon L^p(\mathcal{M},\tau)\to X$ defined by
$P_\Phi(x)=\Phi(x^m)$ $(x\in L^p(\mathcal{M},\tau))$ is a continuous $m$-homogeneous polynomial which
is orthogonally additive on $L^p(\mathcal{M},\tau)_{\textup{sa}}$;
\item[(ii)]
the polynomial $P_\Phi$ is uniquely specified by the map $\Phi$.
\end{enumerate}
Suppose, further, that $X$ is a $q$-normed space, $0<q\le 1$. 
Then:
\begin{enumerate}
\item[(iii)]
$2^{-1/q}\Vert\Phi\Vert\le\Vert P_\Phi\Vert\le\Vert\Phi\Vert$.
\end{enumerate}
Moreover, in the case where $X=\mathbb{C}$,
\begin{enumerate}
\item[(iv)]
the functional $\Phi$ is hermitian if and only if the polynomial $P_\Phi$
is hermitian, in which case $\Vert P_\Phi\Vert=\Vert\Phi\Vert$.
\end{enumerate}
\end{theorem}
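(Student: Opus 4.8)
The plan is to follow the template of Theorem~\ref{1522}, replacing the $C^*$-algebraic tools by the two facts specific to the $L^p$-setting: the H\"older inequality \eqref{1723}, which controls the map $x\mapsto x^m$, and the orthogonality identity \eqref{1150}, which replaces the $\max$-identity used in the $C^*$ case. The crucial observations throughout are that for $x\in L^p(\mathcal{M},\tau)$ one has $x^m\in L^{p/m}(\mathcal{M},\tau)$ with $\Vert x^m\Vert_{p/m}\le\Vert x\Vert_p^m$ (apply \eqref{1723} with $m$ factors), and that for $x\in L^{p/m}(\mathcal{M},\tau)_+$ one has $x^{1/m}\in L^p(\mathcal{M},\tau)_+$ with $\Vert x^{1/m}\Vert_p^p=\Vert x\Vert_{p/m}^{p/m}$.

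For (i), I would check that $P_\Phi$ is the $m$-homogeneous polynomial associated with the symmetric $m$-linear map $\varphi(x_1,\dots,x_m)=\frac{1}{m!}\sum_{\sigma\in\mathfrak{S}_m}\Phi(x_{\sigma(1)}\cdots x_{\sigma(m)})$, which is well defined since each product lies in $L^{p/m}(\mathcal{M},\tau)$ by \eqref{1723}; continuity of $P_\Phi$ follows from continuity of $\Phi$ together with $\Vert x^m\Vert_{p/m}\le\Vert x\Vert_p^m$. Orthogonal additivity on $L^p(\mathcal{M},\tau)_{\textup{sa}}$ is immediate: if $x,y$ are self-adjoint with $x\perp y$, then $xy=yx=0$, whence $(x+y)^m=x^m+y^m$. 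For (ii), given a linear $\Psi$ with $P_\Psi=P_\Phi$, I would evaluate at $x^{1/m}$ for $x\in L^{p/m}(\mathcal{M},\tau)_+$ to obtain $\Phi(x)=\Psi(x)$ on positive elements, and then conclude by linearity using the decomposition \eqref{1813}.

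For the quantitative statement (iii), the upper bound is the direct estimate $\Vert P_\Phi(x)\Vert=\Vert\Phi(x^m)\Vert\le\Vert\Phi\Vert\Vert x\Vert_p^m$. The lower bound is where the $L^p$-geometry enters and is the part needing the most care. Given $x\in L^{p/m}(\mathcal{M},\tau)$, I would use \eqref{1813} to write $x=x_1-x_2+i(x_3-x_4)$ with the $x_j$ positive, $x_1\perp x_2$, $x_3\perp x_4$, and the $p/m$-norm control on the pieces. Choosing $\omega$ with $\omega^m=-1$, the orthogonality of $x_1^{1/m}$ and $x_2^{1/m}$ makes the cross terms vanish, giving $(x_1^{1/m}+\omega x_2^{1/m})^m=x_1-x_2$, while \eqref{1150} gives $\Vert x_1^{1/m}+\omega x_2^{1/m}\Vert_p^p=\Vert x_1\Vert_{p/m}^{p/m}+\Vert x_2\Vert_{p/m}^{p/m}\le\Vert x\Vert_{p/m}^{p/m}$, and analogously for $x_3,x_4$. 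Writing $\Phi(x)=P_\Phi(x_1^{1/m}+\omega x_2^{1/m})+iP_\Phi(x_3^{1/m}+\omega x_4^{1/m})$ and using the $q$-subadditivity of the quasinorm together with $\Vert P_\Phi(z)\Vert\le\Vert P_\Phi\Vert\Vert z\Vert_p^m$, I would reach $\Vert\Phi(x)\Vert^q\le 2\Vert P_\Phi\Vert^q\Vert x\Vert_{p/m}^q$, which is the claimed bound. The one delicate point is the bookkeeping of exponents: the estimate $\Vert x_1^{1/m}+\omega x_2^{1/m}\Vert_p^p\le\Vert x\Vert_{p/m}^{p/m}$ must be raised to the power $mq/p$ before feeding it into the $q$-norm inequality.

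Finally, for (iv) with $X=\mathbb{C}$, the identity $P_\Phi^*=P_{\Phi^*}$ is a formal computation, so hermiticity of $\Phi$ forces that of $P_\Phi$, and the converse follows from the uniqueness in (ii). To upgrade the constant to equality when $\Phi$ is hermitian, I would, as in Theorem~\ref{1522}(iv), choose $x$ with $\Vert x\Vert_{p/m}=1$ nearly attaining $\Vert\Phi\Vert$, rotate by a scalar $\alpha$ so that $\Phi(\alpha x)=\vert\Phi(x)\vert$ is real, invoke hermiticity to replace $\alpha x$ by $\Re(\alpha x)$ (whose $p/m$-norm is at most $1$), decompose $\Re(\alpha x)=x_1-x_2$, and evaluate $P_\Phi$ at $x_1^{1/m}+\omega x_2^{1/m}$, whose $L^p$-norm is at most $1$ by \eqref{1150}; this yields $\Vert\Phi\Vert-\varepsilon<\Vert P_\Phi\Vert$. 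I expect the exponent bookkeeping in (iii) to be the only genuinely delicate point; everything else transfers almost verbatim from the $C^*$ case once \eqref{1150} is substituted for the $\max$-identity.
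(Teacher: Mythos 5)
Your proposal is correct and follows essentially the same route as the paper's own proof, which likewise adapts the argument of Theorem~\ref{1522} by substituting the H\"older inequality \eqref{1723} and the orthogonality identity \eqref{1150} for the $C^*$-algebraic $\max$-identity. In particular, your exponent bookkeeping in (iii) --- raising $\bigl\Vert x_1^{1/m}+\omega x_2^{1/m}\bigr\Vert_p^p=\Vert \Re x\Vert_{p/m}^{p/m}$ to the power $mq/p$ so that it enters the $q$-subadditivity estimate as $\Vert \Re x\Vert_{p/m}^{q}$ --- is exactly the step the paper records as \eqref{2029}.
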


\begin{proof}
The proof of this result is similar to that establishing Theorem~\ref{1522}.

(i)
It follows immediately from \eqref{1723} that, for each $x_1,\ldots,x_m\in L^p(\mathcal{M},\tau)$,
\begin{equation}\label{1724}
x_1\cdots x_m\in L^{p/m}(\mathcal{M},\tau)  \text{ and }
\Vert x_1\cdots x_m\Vert_{p/m}\le\Vert x_1\Vert_p\cdots\Vert x_m\Vert_p.
\end{equation}
On the one hand,
this clearly implies that the map $P_\Phi$ is well-defined,
on the other hand,
the map  $x\mapsto x^m$ from $L^p(\mathcal{M},\tau)$ into $L^{p/m}(\mathcal{M},\tau)$ is continuous, and
so $P_\Phi$ is continuous. Further, $P_\Phi$ is the $m$-homogeneous polynomial associated with
the symmetric $m$-linear map $\varphi\colon L^p(\mathcal{M},\tau)^m\to X$ defined by
\[
\varphi(x_1,\ldots,x_m)= 
\frac{1}{m!}\sum_{\sigma \in \mathfrak{S}_m} 
\Phi\left(x_{\sigma(1)} \cdots x_{\sigma(m)} \right) \quad (x_1,\ldots,x_m\in L^p(\mathcal{M},\tau)).
\]
Suppose that $x,y\in L^p(\mathcal{M},\tau)_{\textup{sa}}$ are such that $x\perp y$. 
Then $xy=yx=0$, and so
$(x+y)^m=x^m+y^m$, which gives
\[
P_\Phi(x+y)=\Phi\bigl((x+y)^m\bigr)=
\Phi\bigl(x^m+y^m\bigr)=\Phi\bigl(x^m\bigr)+\Phi\bigl(y^m\bigr)=
P_\Phi(x)+P_\Phi(y).
\]

(ii)
Suppose that $\Psi\colon L^{p/m}(\mathcal{M},\tau)\to X$ is a linear map such that $P_\Psi=P_\Phi$.
For each $x\in L^{p/m}(\mathcal{M},\tau)_+$, we have $x^{1/m}\in L^{p}(\mathcal{M},\tau)$ and
\[
\Phi(x)=
\Phi\bigl(\bigl(x^{1/m}\bigr)^m\bigr)=
P\bigl(x^{1/m}\bigr)=
\Psi\bigl(\bigl(x^{1/m}\bigr)^m\bigr)=
\Psi(x).
\]
By linearity we obtain $\Phi=\Psi$.

(iii)
Next, assume that $X$ is a $q$-normed space.
For each $x\in L^p(\mathcal{M},\tau)$, by \eqref{1724}, we have
\[
\Vert P_\Phi(x)\Vert=\Vert\Phi(x^m)\Vert\le\Vert\Phi\Vert\Vert x^m\Vert_{p/m}\le\Vert\Phi\Vert\Vert x\Vert_p^m,
\] 
which clearly implies that $\Vert P_\Phi\Vert\le\Vert\Phi\Vert$.
Now take $x\in L^{p/m}(\mathcal{M},\tau)$, and take $\omega\in\mathbb{C}$ with $\omega^m=-1$.
Write 
\[
x=
\Re x+i\Im x=
x_1-x_2+i(x_3-x_4)
\] 
as in \eqref{1813} (with $p/m$ instead of $p$).
Since $x_1\perp x_2$ and $x_3\perp x_4$, it follows that
$x_1^{1/m}\perp x_2^{1/m}$ and $x_3^{1/m}\perp x_4^{1/m}$, so that \eqref{1150} gives
\begin{equation}\label{2025}
\begin{split}
\Vert \Re x\Vert_{p/m}^{p/m}
&=
\Vert x_1\Vert_{p/m}^{p/m}+\Vert x_2\Vert_{p/m}^{p/m},\\
\Vert \Im x\Vert_{p/m}^{p/m}
&=
\Vert x_3\Vert_{p/m}^{p/m}+\Vert x_4\Vert_{p/m}^{p/m},
\end{split}
\end{equation}
and
\begin{equation}\label{2028}
\begin{split}
\bigl\Vert
x_1^{1/m}+\omega x_2^{1/m}
\bigr\Vert_p^p & =
\bigl\Vert x_1^{1/m}\bigr\Vert_p^p+\bigr\Vert x_2^{1/m}
\bigr\Vert_p^p,\\
\bigl\Vert
x_3^{1/m}+\omega x_4^{1/m}
\bigr\Vert_p^p & =
\bigl\Vert x_3^{1/m}\bigr\Vert_p^p+\bigr\Vert x_4^{1/m}
\bigr\Vert_p^p.
\end{split}
\end{equation}
Further, we have $x_1^{1/m},x_2^{1/m},x_3^{1/m},x_4^{1/m}\in L^p(\mathcal{M},\tau)$ and
\[
\bigl\Vert x_1^{1/m}\bigr\Vert_p=
\Vert x_1\Vert_{p/m}^{1/m}, \
\bigl\Vert x_2^{1/m}\bigr\Vert_p=
\Vert x_2\Vert_{p/m}^{1/m}, \
\bigl\Vert x_3^{1/m}\bigr\Vert_p=
\Vert x_3\Vert_{p/m}^{1/m}, \
\bigl\Vert x_4^{1/m}\bigr\Vert_p=
\Vert x_4\Vert_{p/m}^{1/m},
\]
so that \eqref{2025} and \eqref{2028} give
\begin{equation}\label{2029}
\begin{split}
\bigl\Vert x_1^{1/m}+\omega x_2^{1/m}\bigr\Vert_p^p
&=
\Vert \Re x\Vert_{p/m}^{p/m},\\
\bigl\Vert x_3^{1/m}+\omega x_4^{1/m}\bigr\Vert_p^p
&=
\Vert \Im x\Vert_{p/m}^{p/m}.
\end{split}
\end{equation}
On the other hand, we have
\begin{equation*}
\bigl(x_1^{1/m}+\omega x_2^{1/m}\bigr)^m=x_1-x_2=\Re x,\quad
\bigl(x_3^{1/m}+\omega x_4^{1/m}\bigr)^m=x_3-x_4=\Im x,
\end{equation*}
whence
\begin{align*}
\Phi(x) & =
\Phi(\Re x)+i\Phi(\Im x)=
\Phi\bigl(\bigl(x_1^{1/m}+\omega x_2^{1/m}\bigr)^m\bigr)+
i\Phi\bigl(\bigl(x_3^{1/m}+\omega x_4^{1/m}\bigr)^m\bigr) \\
& =
P_\Phi\bigl(x_1^{1/m}+\omega x_2^{1/m}\bigr)+
iP_\Phi\bigl(x_3^{1/m}+\omega x_4^{1/m}\bigr).
\end{align*}
Hence, by \eqref{2029},
\begin{equation*}
\begin{split}
\Vert\Phi(x)\Vert^q
&\le
\bigl\Vert P_\Phi\bigl(x_1^{1/m}+\omega x_2^{1/m}\bigr)\bigr\Vert^q+
\bigl\Vert P_\Phi\bigl(x_3^{1/m}+\omega x_4^{1/m}\bigr)\bigr\Vert^q\\
&\le
\Vert P_\Phi\Vert^q\bigl\Vert x_1^{1/m}+\omega x_2^{1/m}\bigr\Vert_p^{mq}+
\Vert P_\Phi\Vert^q\bigl\Vert x_3^{1/m}+\omega x_4^{1/m}\bigr\Vert_p^{mq}\\
&=
\Vert P_\Phi\Vert^q\left(
\Vert \Re x\Vert^q+\Vert \Im x\Vert^q\right)\\
&\le
\Vert P_\Phi\Vert^q 2\Vert x\Vert^q.
\end{split}
\end{equation*}
This clearly forces $\Vert\Phi\Vert\le 2^{1/q}\Vert P_\Phi\Vert$, as claimed.

(iv)
It is straightforward to check that $P_\Phi^*=P_{\Phi^*}$. 
From this deduce that $\Phi$ is hermitian if and only if $P_\Phi$ is hermitian as in the proof of Theorem~\ref{1522}(iv).
Suppose that $\Phi$ is a hermitian functional.
By direct calculation, we see that $P_\Phi$ is hermitian, and it remains to prove that
$\Vert P_\Phi\Vert=\Vert\Phi\Vert$. We only need to show that
$\Vert\Phi\Vert\le\Vert P_\Phi\Vert$. To this end,
let $\varepsilon\in\mathbb{R}^+$, 
and choose $x\in L^{p/m}(\mathcal{M},\tau)$ such that
$\Vert x\Vert_{p/m}=1$ and 
$\Vert\Phi\Vert-\varepsilon<\vert\Phi(x)\vert$.
We take $\alpha\in\mathbb{C}$ with $\vert\alpha\vert=1$ and 
$\vert\Phi(x)\vert=\alpha\Phi(x)$, so that
\[
\Vert\Phi\Vert-\varepsilon<\vert\Phi(x)\vert=\Phi(\alpha x)=
\overline{\Phi(\alpha x)}=\Phi\bigl((\alpha x)^*\bigr).
\]
We see that
$\Re(\alpha x)\in L^{p/m}(\mathcal{M},\tau)_{\textup{sa}}$,
$\Vert \Re(\alpha x)\Vert_{p/m}\le 1$, and 
$\Vert\Phi\Vert-\varepsilon<\Phi(\Re(\alpha x))$.
Now we consider the decomposition $\Re(\alpha x)=x_1-x_2$ as in \eqref{1813} (with $p/m$ instead of $p$), 
and take $\omega\in\mathbb{C}$ with $\omega^m=-1$.
As in \eqref{2029}, we see that
$\bigl\Vert x_1^{1/m}+\omega x_2^{1/m}\bigr\Vert=\Vert \Re(\alpha x)\Vert^{1/m}\le 1$.
Moreover, we have
\[
P_\Phi\bigl( x_1^{1/m}+\omega x_2^{1/m}\bigr)=
\Phi\bigl(\bigl( x_1^{1/m}+\omega x_2^{1/m}\bigr)^m\bigr)=
\Phi(\Re(\alpha x)),
\]
and so $\Vert\Phi\Vert-\varepsilon<\Vert P_\Phi\Vert$.
\end{proof}

\begin{theorem}\label{1527}
Let $\mathcal{M}$ be a von Neumann algebra with a normal semifinite faithful trace $\tau$,
let $X$ be a topological linear space, and 
let $P\colon L^p(\mathcal{M},\tau)\to X$ be a continuous $m$-homogeneous polynomial
with $0<p<\infty$.
Then the following conditions are equivalent:
\begin{enumerate}
\item[(i)]
there exists a continuous linear map 
$\Phi\colon L^{p/m}(\mathcal{M},\tau)\to X$ such that $P(x)=\Phi(x^m)$  $(x\in L^p(\mathcal{M},\tau))$;
\item[(ii)]
the polynomial $P$ is orthogonally additive on $L^p(\mathcal{M},\tau)_{\textup{sa}}$;
\item[(iii)]
the polynomial $P$ is orthogonally additive on $S(\mathcal{M},\tau)_+$.
\end{enumerate}
If the conditions are satisfied, then the map $\Phi$ is unique.
\end{theorem}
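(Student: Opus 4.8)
The plan is to prove the cycle of implications together with the uniqueness clause. The implication (i)$\Rightarrow$(ii) is immediate from Theorem~\ref{1242}(i), and (ii)$\Rightarrow$(iii) is trivial since $S(\mathcal{M},\tau)_+\subseteq L^p(\mathcal{M},\tau)_{\textup{sa}}$. Uniqueness of $\Phi$ follows at once from Theorem~\ref{1242}(ii). Thus the whole content lies in the implication (iii)$\Rightarrow$(i), which I would establish by localising $P$ to the finite-trace corners of $\mathcal{M}$, gluing the resulting linear maps over $S(\mathcal{M},\tau)$, and then extending by density to $L^{p/m}(\mathcal{M},\tau)$.

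First I would fix a projection $e\in\mathcal{M}$ with $\tau(e)<\infty$ and consider the corner $\mathcal{M}_e=e\mathcal{M}e$, which is a von Neumann algebra with unit $e$ and satisfies $\mathcal{M}_e\subseteq S(\mathcal{M},\tau)$. Since $\Vert x\Vert_p=\tau(\vert x\vert^p)^{1/p}\le\tau(e)^{1/p}\Vert x\Vert$ for every $x\in\mathcal{M}_e$, the $\Vert\cdot\Vert_p$-topology on $\mathcal{M}_e$ is weaker than the operator-norm topology; hence the restriction $P|_{\mathcal{M}_e}$ is a norm-continuous $m$-homogeneous polynomial. Moreover $(\mathcal{M}_e)_+\subseteq S(\mathcal{M},\tau)_+$, so $P|_{\mathcal{M}_e}$ is orthogonally additive on $(\mathcal{M}_e)_+$, and Corollary~\ref{1523} yields a unique continuous linear map $\Phi_e\colon\mathcal{M}_e\to X$ with $P(x)=\Phi_e(x^m)$ $(x\in\mathcal{M}_e)$. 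If $e\le f$ are finite-trace projections, then $\mathcal{M}_e\subseteq\mathcal{M}_f$, so the uniqueness in Corollary~\ref{1523} forces $\Phi_f|_{\mathcal{M}_e}=\Phi_e$; as the finite-trace projections are upward directed (with $\tau(e\vee f)\le\tau(e)+\tau(f)$) and $S(\mathcal{M},\tau)=\bigcup_e\mathcal{M}_e$, the maps $\Phi_e$ glue to a single well-defined linear map $\Phi_0\colon S(\mathcal{M},\tau)\to X$ satisfying $P(x)=\Phi_0(x^m)$ for every $x\in S(\mathcal{M},\tau)$.

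The heart of the argument, and the step I expect to be the main obstacle, is to show that $\Phi_0$ is continuous for the $\Vert\cdot\Vert_{p/m}$-quasinorm. I would mimic the proof of Lemma~\ref{l12}, replacing the operator norm by $\Vert\cdot\Vert_{p/m}$. Given a neighbourhood $U$ of $0$ in $X$, choose a balanced $V$ with $V+V+V+V\subseteq U$ and, using the $\Vert\cdot\Vert_p$-continuity of $P$, an $r>0$ with $P(y)\in V$ whenever $\Vert y\Vert_p<r$. For $x\in S(\mathcal{M},\tau)$ with $\Vert x\Vert_{p/m}<r^m$, write $x=x_1-x_2+i(x_3-x_4)$ as in~\eqref{1813} (with $p/m$ in place of $p$), so that $x_j\in S(\mathcal{M},\tau)_+$, $\Vert x_j\Vert_{p/m}\le\Vert x\Vert_{p/m}$, and $\Vert x_j^{1/m}\Vert_p=\Vert x_j\Vert_{p/m}^{1/m}<r$. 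Linearity of $\Phi_0$ then gives $\Phi_0(x)=P(x_1^{1/m})-P(x_2^{1/m})+iP(x_3^{1/m})-iP(x_4^{1/m})\in V+V+V+V\subseteq U$, so $\Phi_0$ is continuous at $0$, hence continuous.

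Finally, since $S(\mathcal{M},\tau)$ is $\Vert\cdot\Vert_{p/m}$-dense in $L^{p/m}(\mathcal{M},\tau)$, the map $\Phi_0$ extends to a continuous linear map $\Phi$ from $L^{p/m}(\mathcal{M},\tau)$ into the completion of $X$. Because $y\mapsto\Phi(y^m)$ and $P$ are both $\Vert\cdot\Vert_p$-continuous (the former by continuity of $y\mapsto y^m$ from $L^p$ to $L^{p/m}$, as in Theorem~\ref{1242}(i)) and agree on the dense set $S(\mathcal{M},\tau)$, we get $P(y)=\Phi(y^m)$ for all $y\in L^p(\mathcal{M},\tau)$. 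It then remains to see that $\Phi$ actually takes values in $X$; as in the proof of Theorem~\ref{1137} this is the positive-element trick: for $x\in L^{p/m}(\mathcal{M},\tau)_+$ one has $x^{1/m}\in L^p(\mathcal{M},\tau)$ and $\Phi(x)=\Phi((x^{1/m})^m)=P(x^{1/m})\in X$, and the decomposition~\eqref{1813} together with linearity then gives $\Phi(L^{p/m}(\mathcal{M},\tau))\subseteq X$. This produces the required $\Phi$ and completes (iii)$\Rightarrow$(i).
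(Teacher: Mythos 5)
Your proposal is correct and follows essentially the same route as the paper's own proof: localisation of $P$ to the finite-trace corners $\mathcal{M}_e$ (where Corollary~\ref{1523} applies), gluing the maps $\Phi_e$ over the directed family of finite-trace projections via the uniqueness clause, proving $\Vert\cdot\Vert_{p/m}$-continuity by the decomposition~\eqref{1813} as in Lemma~\ref{l12}, and extending by density into the completion of $X$ before descending to $X$ by the positive-element trick. The only cosmetic difference is that you establish operator-norm continuity of $P\mid_{\mathcal{M}_e}$ by the single inequality $\Vert x\Vert_p\le\tau(e)^{1/p}\Vert x\Vert$ rather than the paper's pointwise H\"older estimate, which is the same idea.
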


\begin{proof}
Theorem~\ref{1242} shows that (i)$\Rightarrow$(ii), and
it is obvious that (ii)$\Rightarrow$(iii).
We proceed to prove that (iii)$\Rightarrow$(i).

Suppose that (iii) holds.
Let $e\in\mathcal{M}$ be a projection such that $\tau(e)<\infty$,
and consider the von Neumann algebra $\mathcal{M}_e=e\mathcal{M}e$.
We claim that $\mathcal{M}_e\subset S(\mathcal{M},\tau)$ and that
there exists a unique continuous linear map $\Phi_e\colon\mathcal{M}_e\to X$ such that
\begin{equation}\label{1904}
P(x)=\Phi_e(x^m)\quad(x\in\mathcal{M}_e).
\end{equation} 
Set $x\in\mathcal{M}_e$, and write
$x=(x_1-x_2)+i(x_3-x_4)$ with $x_1,x_2,x_3,x_4\in\mathcal{M}_{e \, +}$.
Then $\soporte (x_j)\le e$ and therefore $\tau\bigl(\soporte (x_j)\bigr)\le\tau(e)<\infty$ $(j\in\{1,2,3,4\})$.
This shows that $x_j\in S(\mathcal{M},\tau)$ $(j\in\{1,2,3,4\})$, whence $x\in S(\mathcal{M},\tau)$.
Our next goal is to show that the restriction $P\mid_{\mathcal{M}_e}$ is continuous 
(with respect to the norm that $\mathcal{M}_e$ inherits as a closed subspace of $\mathcal{M}$).
Let $x\in\mathcal{M}_e$, and let $U\subset X$ be a neighbourhood of $P(x)$.
Since $P$ is continuous, the set $P^{-1}(U)$ is a neighbourhood of $x$ in $L^p(\mathcal{M},\tau)$,
which implies that
there exists $r\in\mathbb{R}^+$ such that $P(y)\in U$ whenever $y\in L^p(\mathcal{M},\tau)$
and $\Vert y- x\Vert_p<r$. If $y\in\mathcal{M}_e$ is such that $\Vert y- x\Vert<r/\Vert e\Vert_p$, then,
from \eqref{1723}, we obtain
\[
\Vert y-x\Vert_p=\Vert e(y-x)\Vert_p\le\Vert e\Vert_p\Vert y- x\Vert<r
\] 
and therefore $P(y)\in U$.
Hence $P\mid_{\mathcal{M}_e}$ is continuous. Since, by hypothesis, the polynomial $P\mid_{\mathcal{M}_e}$
is orthogonally additive on $\mathcal{M}_{e \, +}$, Corollary~\ref{1523} states that there exists a unique
continuous linear map $\Phi_e\colon\mathcal{M}_e\to X$ such that \eqref{1904} holds.

For each $x\in S(\mathcal{M},\tau)$, define
\[
\Phi(x)=\Phi_e(x),
\]
where $e\in\mathcal{M}$ is any projection such that
\begin{equation}\label{1905}
ex=xe=x\quad\text{and}\quad \tau(e)<\infty.
\end{equation}
We will show that $\Phi$ is well-defined.
For this purpose we first check that, if $x\in S(\mathcal{M},\tau)$, 
then there exists a projection $e$ such that \eqref{1905} holds.
Indeed, 
we write $x=\sum_{j=1}^k\alpha_j x_j$ with
$\alpha_1,\ldots,\alpha_k\in\mathbb{C}$ and $x_1,\ldots,x_k\in S(\mathcal{M},\tau)_+$,
and define $e=\soporte (x_1)\vee\cdots\vee\soporte (x_k)$. Then $ex=xe=x$ and
$\tau(e)\le\sum_{j=1}^k\tau\bigl(\soporte (x_j)\bigr)<\infty$, as required.
Suppose that $x\in S(\mathcal{M},\tau)$ and that $e_1,e_2\in\mathcal{M}$ are projections satisfying \eqref{1905}.
Then the projection $e=e_1\vee e_2$ satisfies \eqref{1905} and $\mathcal{M}_{e_1},\mathcal{M}_{e_2}\subset\mathcal{M}_e$.
The uniqueness of the representation \eqref{1904} on both $\mathcal{M}_{e_1}$ and $\mathcal{M}_{e_2}$ gives
$\Phi_{e}\mid_{\mathcal{M}_{e_1}}=\Phi_{e_1}$ and
$\Phi_{e}\mid_{\mathcal{M}_{e_2}}=\Phi_{e_2}$, which implies that
$\Phi_{e_1}(x)=\Phi_{e}(x)=\Phi_{e_2}(x)$.

We now show that $\Phi$ is linear.
Take $x_1,x_2\in S(\mathcal{M},\tau)$ and $\alpha,\beta\in\mathbb{C}$.
Let $e_1,e_2\in\mathcal{M}$ be projections such that $e_jx_j=x_je_j=x_j$
and $\tau(e_j)<\infty$ $(j\in\{1,2\})$.
Then the projection $e=e_1\vee e_2$ satisfies
\begin{gather*}
ex_j=x_je=x_j\quad (j\in\{1,2\}), \\
e(\alpha x_1+\beta x_2)=(\alpha x_1+\beta x_2)e=\alpha x_1+\beta x_2,
\end{gather*}
and
\[
\tau(e)\le\tau(e_1)+\tau(e_2)<\infty.
\] 
Thus
\[
\Phi(x_j)=\Phi_e(x_j)\quad (j\in\{1,2\})
\]
and
\[
\Phi(\alpha x_1+\beta x_2)=
\Phi_e(\alpha x_1+\beta x_2)=
\alpha\Phi_e(x_1)+\beta\Phi_e(x_2)=
\alpha\Phi(x_1)+\beta\Phi(x_2).
\]

We see from the definition of $\Phi$ that
\begin{equation}\label{1906}
P(x)=\Phi(x^m) \quad (x\in S(\mathcal{M},\tau)).
\end{equation}

Our next concern will be the continuity of $\Phi$ with respect to the norm $\Vert\cdot\Vert_{p/m}$.
Let $U$ be a neighbourhood of $0$ in $X$. Let $V$ be a balanced neighbourhood of $0$ in $X$ with
$V+V+V+V\subset U$. 
The set $P^{-1}(V)$ is a neighbourhood of $0$ in $L^p(\mathcal{M},\tau)$,
which implies that there exists $r\in\mathbb{R}^+$ such that $P(x)\in V$ whenever $x\in L^p(\mathcal{M},\tau)$
and $\Vert x\Vert_p<r$.
Take $x\in S(\mathcal{M},\tau)$ with $\Vert x\Vert_{p/m}<r^m$, and write 
$x=(x_1-x_2)+i(x_3-x_4)$ as in \eqref{1813} (with $p/m$ instead of $p$). 
Then it is immediate to check that actually
$x_1,x_2,x_3,x_4\in S(\mathcal{M},\tau)_+$ and, further,
$\Vert x_j\Vert_{p/m}\le\Vert x\Vert_{p/m}$ $(j\in\{1,2,3,4\})$.
For each $j\in\{1,2,3,4\}$, we have
\begin{align*}
\bigl\Vert x_{j}^{1/m}\bigr\Vert_p & =
\tau\bigl(x_{j}^{p/m}\bigr)^{1/p}= 
\bigl(\tau\bigl(x_{j}^{p/m}\bigr)^{m/p}\bigr)^{1/m}=
{\Vert x_j\Vert_{p/m}}^{1/m} \\
& \le
\Vert x\Vert_{p/m}^{1/m}<r,
\end{align*}
whence
\begin{align*}
\Phi(x) & =
\Phi\left(
\bigl(x_{1}^{1/m}\bigr)^m-\bigl(x_{2}^{1/m}\bigr)^m+i\bigl(x_{3}^{1/m}\bigr)^m-i\bigl(x_{4}^{1/m}\bigr)^m\right) \\
& =
\Phi\left(\bigl(x_{1}^{1/m}\bigr)^m\right)-
\Phi\left(\bigl(x_{2}^{1/m}\bigr)^m\right)+
i\Phi\left(\bigl(x_{3}^{1/m}\bigr)^m\right)-
i\Phi\left(\bigl(x_{4}^{1/m}\bigr)^m\right) \\
& =
P\bigl(x_{1}^{1/m}\bigr)-P\bigl(x_{2}^{1/m}\bigr) \\
& \quad {}+iP\bigl(x_{3}^{1/m}\bigr)-iP\bigl(x_{4}^{1/m}\bigr)\in
V+V+V+V\subset U,
\end{align*}
which establishes the continuity of $\Phi$. 
Since $S(\mathcal{M},\tau)$ is dense in $L^{p/m}(\mathcal{M},\tau)$,
the map $\Phi$ extends uniquely to a continuous linear map from $L^{p/m}(\mathcal{M},\tau)$ into
the completion of $X$. By abuse of notation we continue to write $\Phi$ for this extension.
Since both $P$ and $\Phi$ are continuous, \eqref{1906} gives
$P(x)=\Phi(x^m)$ for each $x\in L^p(\mathcal{M})$.
The task is now to show that the image of $\Phi$ is actually contained in $X$.
Of course, it suffices to show that $\Phi$ takes $L^{p/m}(\mathcal{M},\tau)_+$ into $X$.
Let $x\in L^{p/m}(\mathcal{M},\tau)_+$.
Then $x^{1/m}\in L^{p}(\mathcal{M},\tau)_+$
and
\[
\Phi(x)=
\Phi\bigl(\bigl(x^{1/m}\bigr)^m\bigr)=
P\bigl(x^{1/m}\bigr)\in
X,
\]
as required.

The uniqueness of the map $\Phi$ is given by Theorem~\ref{1242}(ii).
\end{proof}

Let us note that the space of all continuous $m$-homogeneous polynomials from $L^p(\mathcal{M},\tau)$ 
into any topological linear space $X$ which are orthogonally additive on $S(\mathcal{M},\tau)_+$ is 
sufficiently rich in the case where $p/m\ge 1$, because of the existence of continuous linear functionals 
on $L^{p/m}(\mathcal{M},\tau)$.
However, some restriction on the space $X$ must be imposed when we consider the case $p/m<1$
and the von Neumann algebra $\mathcal{M}$ has no minimal projections, because in this case the
dual of $L^{p/m}(\mathcal{M},\tau)$ is trivial (\cite{Sa}). In fact, there are no non-zero continuous
linear maps from $L^p(\mathcal{M},\tau)$ into any $q$-normed space $X$ with $q>p$.
We think that this property is probably well-known, but we have not been able to find any reference,
so that we next present a proof of this result for completeness.

\begin{proposition}\label{1641}
Let $\mathcal{M}$ be a von Neumann algebra with a normal semifinite faithful trace $\tau$ and with no minimal projections, 
let $X$ be a $q$-normed space, $0<q\le 1$, and 
let $\Phi\colon L^p(\mathcal{M},\tau)\to X$ be a continuous linear map with $0<p<q$.
Then $\Phi=0$.
\end{proposition}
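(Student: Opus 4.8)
The plan is to mimic the classical proof that $L^p(0,1)$ carries no non-zero continuous linear functional for $0<p<1$, exploiting that the $q$-norm of $X$ is $q$-subadditive while orthogonal splittings lower the $p$-norm faster (since $q>p$). First I would reduce to positive elements: by the decomposition~\eqref{1813} every $x\in L^p(\mathcal{M},\tau)$ is a complex-linear combination of four elements of $L^p(\mathcal{M},\tau)_+$, so by linearity of $\Phi$ it is enough to prove that $\Phi(x)=0$ for each $x\in L^p(\mathcal{M},\tau)_+$. I would also record that, $\Phi$ being a continuous linear map from the $p$-normed space $L^p(\mathcal{M},\tau)$ into the $q$-normed space $X$, there is a constant $C\ge 0$ with $\Vert\Phi(y)\Vert\le C\Vert y\Vert_p$ for every $y$.

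The core is the following \emph{splitting claim}: for each $x\in L^p(\mathcal{M},\tau)_+$ and each $n\in\mathbb{N}$ there exist mutually orthogonal $x_1,\dots,x_n\in L^p(\mathcal{M},\tau)_+$ with $x=x_1+\cdots+x_n$ and $\Vert x_j\Vert_p^p=\tfrac1n\Vert x\Vert_p^p$ $(j=1,\dots,n)$. To obtain it I would fix a maximal abelian $\ast$-subalgebra $\mathcal{A}$ of $\mathcal{M}$ containing all the spectral projections of $x$, so that $x$ is affiliated with $\mathcal{A}$. The decisive point is that $\mathcal{A}$ inherits the absence of minimal projections: were $p\in\mathcal{A}$ a minimal projection of $\mathcal{A}$, then $\mathcal{A}p=\mathbb{C}p$ would be maximal abelian in $p\mathcal{M}p$, forcing $p\mathcal{M}p=\mathbb{C}p$ and making $p$ a minimal projection of $\mathcal{M}$, contrary to hypothesis. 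Hence $\mathcal{A}\cong L^\infty(\Omega,\nu)$ for a non-atomic (semifinite) measure $\nu$, the element $x$ corresponds to a nonnegative measurable function $f$, and $\int_\Omega f^p\,d\nu=\Vert x\Vert_p^p<\infty$. Since $\nu$ is non-atomic, the finite measure $f^p\,d\nu$ is non-atomic as well, so a standard partition argument for non-atomic measures provides a measurable partition $\Omega=\Omega_1\cup\cdots\cup\Omega_n$ with $\int_{\Omega_j}f^p\,d\nu=\tfrac1n\int_\Omega f^p\,d\nu$. Letting $e_j\in\mathcal{A}$ be the projection corresponding to $\chi_{\Omega_j}$, the elements $x_j=xe_j$ are positive, mutually orthogonal, add up to $x$, and satisfy $\Vert x_j\Vert_p^p=\tau(x^pe_j)=\tfrac1n\Vert x\Vert_p^p$, as claimed.

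Granting the claim, the conclusion is immediate. From $\Vert x_j\Vert_p^p=\tfrac1n\Vert x\Vert_p^p$ we get $\Vert x_j\Vert_p=n^{-1/p}\Vert x\Vert_p$, and the $q$-subadditivity of the norm of $X$ together with $\Vert\Phi(x_j)\Vert\le C\Vert x_j\Vert_p$ yields
\[
\Vert\Phi(x)\Vert^q=\Bigl\Vert\sum_{j=1}^n\Phi(x_j)\Bigr\Vert^q\le\sum_{j=1}^n\Vert\Phi(x_j)\Vert^q\le C^q\sum_{j=1}^n\Vert x_j\Vert_p^q=C^q\,n^{1-q/p}\Vert x\Vert_p^q.
\]
Because $q>p$ we have $1-q/p<0$, so letting $n\to\infty$ forces $\Vert\Phi(x)\Vert=0$, that is, $\Phi(x)=0$. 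By the first paragraph this gives $\Phi=0$.

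I expect the main obstacle to be the splitting claim, and precisely the passage from ``$\mathcal{M}$ has no minimal projections'' to an \emph{exact} equal splitting of $x$. The spectral projections of $x$ generate an abelian algebra that may itself be atomic (most strikingly when $x$ is a projection), so one cannot subdivide $x$ inside $W^*(x)$; enlarging to a maximal abelian subalgebra of $\mathcal{M}$ and transporting the absence of minimal projections to it is exactly what overcomes this, after which the division reduces to cutting a finite non-atomic measure into pieces of prescribed mass. The points I would verify with care are the lemma that $\mathcal{A}p$ is maximal abelian in $p\mathcal{M}p$, the identification $\mathcal{A}\cong L^\infty(\Omega,\nu)$ with $\nu$ non-atomic (so that $f^p\,d\nu$ is non-atomic), and the identity $\Vert xe_j\Vert_p^p=\tau(x^pe_j)$; each is routine but should be stated explicitly.
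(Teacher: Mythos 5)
Your proposal is correct in strategy and takes a genuinely different route from the paper. The paper never splits general positive elements: it works entirely inside the projection lattice, first proving (by two applications of Zorn's lemma plus normality of $\tau$) that every projection $e_0$ with $\tau(e_0)<\infty$ has subprojections of every prescribed trace in $[0,\tau(e_0)]$, then killing $\Phi$ on finite-trace projections by an iterated halving-and-pigeonhole argument (at each stage keep the half $e_n$ with $\Vert\Phi(e_n)\Vert^q\ge\tfrac12\Vert\Phi(e_{n-1})\Vert^q$, and observe that $\bigl\Vert 2^{n/q}e_n\bigr\Vert_p\to 0$ because $p<q$), and finally reaching all of $L^p(\mathcal{M},\tau)$ through two approximation steps: spectral approximation of elements of $S(\mathcal{M},\tau)_+$ by finite positive combinations of finite-trace projections, converted into $\Vert\cdot\Vert_p$-convergence via H\"older \eqref{1723}, and then density of $S(\mathcal{M},\tau)$. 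You instead split an arbitrary $x\in L^p(\mathcal{M},\tau)_+$ exactly into $n$ orthogonal pieces of equal $p$-mass and estimate directly; this eliminates both approximation steps and is closer to the classical proof that $L^p(0,1)^*=\{0\}$ for $p<1$, but it imports the structure theory of abelian von Neumann algebras (masa, $L^\infty$-representation, Sierpi\'nski's theorem on non-atomic measures) together with your transfer lemma that a masa inherits the absence of minimal projections via corners --- a lemma whose proof sketch ($\mathcal{A}p$ maximal abelian in $p\mathcal{M}p$, hence $\mathcal{A}p=\mathbb{C}p$ forces $p\mathcal{M}p=\mathbb{C}p$) is correct. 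The numerical mechanism at the end ($q$-subadditivity against $n^{1-q/p}\to 0$) is the same in both proofs; the paper's version of it only ever needs splittings into two equal halves.

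The one step you should not pass over with a ``hence'' is the claim that $\mathcal{A}\cong L^\infty(\Omega,\nu)$ with $\nu$ non-atomic \emph{and} $\int_\Omega f^p\,d\nu=\Vert x\Vert_p^p$. A generic representation of $\mathcal{A}$ will not satisfy the second condition, and you cannot obtain one by representing the restricted trace, because $\tau|_{\mathcal{A}}$ need not be semifinite even though $\mathcal{M}$ has no minimal projections: for instance, in $\mathcal{M}=\mathcal{B}\bigl(L^2[0,1]\bigr)\,\bar\otimes\, L^\infty[0,1]$ with trace $\traza\otimes\int$, which has no minimal projections, the masa $L^\infty[0,1]\,\bar\otimes\, L^\infty[0,1]$ (first factor the multiplication masa) contains no non-zero element of finite trace, so $\tau$ restricted to it takes only the values $0$ and $\infty$. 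The repair is standard and keeps your architecture intact: instead of the ill-defined measure $f^p\,d\nu$, use the functional $\mu(a)=\tau(x^pa)$ on $\mathcal{A}$, which is positive, finite and \emph{normal} (it is implemented by $x^p\in L^1(\mathcal{M},\tau)$); under any representation $\mathcal{A}\cong L^\infty(\Omega,\nu)$ it therefore corresponds to a finite measure absolutely continuous with respect to $\nu$, and absolute continuity with respect to the non-atomic $\nu$ forces $\mu$ to be non-atomic. Sierpi\'nski's theorem then partitions $\Omega$ into $n$ sets of equal $\mu$-measure, and $\mu(e_j)=\tau(x^pe_j)=\Vert xe_j\Vert_p^p$ is exactly the identity your estimate requires. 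With this modification your proof is complete and valid.
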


\begin{proof}
The proof will be divided in a number of steps.

Our first step is to show that for each projection $e_0\in\mathcal{M}$ with $\tau(e_0)<\infty$ and each $0\le\rho\le\tau(e_0)$,
there exists a projection $e\in\mathcal{M}$ such that
$e\le e_0$ and $\tau(e)=\rho$.
Set 
\[
\mathcal{P}_1=\bigl\{e\in\mathcal{M} : e\text{ is a projection, } e\le e_0, \ \tau(e)\ge\rho\bigr\}.
\]
Note that $e_0\in\mathcal{P}_1$, so that $\mathcal{P}_1$ is non-empty.
Let $\mathcal{C}$ be a chain in $\mathcal{P}_1$, and let $e'=\wedge_{e\in\mathcal{C}}e$. 
Then $e'$ is a projection and $e'\le e_0$. For each $e\in\mathcal{C}$, since $\tau(e_0)<\infty$, it follows that
$\tau(e_0)-\tau(e)=\tau(e_0-e)$. From the normality of $\tau$ we now deduce that
\begin{align*}
\tau(e_0)-\inf_{e\in\mathcal{C}}\tau(e) & =
\sup_{e\in\mathcal{C}}\bigl(\tau(e_0)-\tau(e)\bigr) =
\sup_{e\in\mathcal{C}}\tau(e_0-e) \\ 
& = \tau\left(\vee_{e\in\mathcal{C}}(e_0-e)\right)=
\tau(e_0-e').
\end{align*}
Hence $\tau(e')=\inf_{e\in\mathcal{C}}\tau(e)\ge\rho$, which shows that $e'$ is a lower bound of $\mathcal{C}$,
and so, by Zorn's lemma, $\mathcal{P}_1$ has a minimal element, say $e_1$.
We now consider the set
\[
\mathcal{P}_2=\bigl\{e\in\mathcal{M} : e\text{ is a projection, } e\le e_1, \ \tau(e)\le\rho\bigr\}.
\]
Note that $0\in\mathcal{P}_2$, so that $\mathcal{P}_2$ is non-empty.
Let $\mathcal{C}$ be a chain in $\mathcal{P}_2$, and let $e'=\vee_{e\in\mathcal{C}}e$.
Then $e'\le e_1$, and the normality of $\tau$ yields
\[
\tau(e')=\sup_{e\in\mathcal{C}}\tau(e)\le\rho.
\]
This implies that $e'$ is an upper bound of $\mathcal{C}$, and so, 
by Zorn's lemma, $\mathcal{P}_2$ has a maximal element, say $e_2$.
Assume towards a contradiction that $e_1\ne e_2$.
Since, by hypothesis, $\mathcal{M}$ has  no minimal projections, it follows that there exists
a non-zero projection $e<e_1-e_2$. Since $e\perp e_2$, we see that $e_2+e$ is a projection.
Further,  we have $e_2<e_2+e<e_1$. 
The maximality of $e_2$ implies that $\tau(e_2+e)>\rho$, which implies that $e_2+e\in\mathcal{P}_1$,
contradicting the minimality of $e_1$.
Thus $e_1=e_2$, and this clearly implies that $\tau(e_1)=\tau(e_2)=\rho$.

Our next goal is to show that $\Phi(e_0)=0$ for each projection $e_0$ with $\tau(e_0)<\infty$.
From the previous step, it follows that there exists a projection $e\le e_0$ with $\tau(e)=\frac{1}{2}\tau(e_0)$.
Set $e'=e_0-e$. Then $\tau(e')=\frac{1}{2}\tau(e_0)$.
Further,
\[
\Vert\Phi(e_0)\Vert^q=
\Vert\Phi(e)+\Phi(e')\Vert^q\le
\Vert\Phi(e)\Vert^q+\Vert\Phi(e')\Vert^q,
\]
and therefore either $\Vert\Phi(e)\Vert^q\ge\frac{1}{2}\Vert\Phi(e_0)\Vert^q$ or 
$\Vert\Phi(e')\Vert^q\ge\frac{1}{2}\Vert\Phi(e_0)\Vert^q$. We define $e_1$ to be
any of the projections $e,e'$ for which the inequality holds. We thus get
$e_1\le e_0$, $\tau(e_1)=\frac{1}{2}\tau(e_0)$, and $\Vert\Phi(e_1)\Vert\ge 2^{-1/q}\Vert\Phi(e_0)\Vert$.
By repeating the process, we get a decreasing sequence of projections $(e_n)$ such that
\[
\tau(e_n)=2^{-n}\tau(e_0)\quad\text{and}\quad\Vert\Phi(e_n)\Vert\ge 2^{-n/q}\Vert\Phi(e_0)\Vert\quad(n\in\mathbb{N}).
\] 
Then
\[
\bigl\Vert 2^{n/q}e_n\bigr\Vert_p=2^{n/q}\tau(e_n)^{1/p}=2^{n(1/q-1/p)}\tau(e_0)^{1/p},
\]
which converges to zero, because $p<q$. Since $\Phi$ is continuous and
$\Vert\Phi(e_0)\Vert\le\bigl\Vert \Phi\bigl(2^{n/q}e_n\bigr)\bigr\Vert_p$ $(n\in\mathbb{N})$,
it may be concluded that $\Phi(e_0)=0$, as claimed.

Our next concern is to show that $\Phi$ vanishes on $S(\mathcal{M},\tau)$.
Of course, it suffices to show that $\Phi$ vanishes on $S(\mathcal{M},\tau)_+$.
Take $x\in S(\mathcal{M},\tau)_+$, and let $e=\soporte (x)$, so that $\tau(e)<\infty$.
The spectral decomposition implies that there exists a sequence
$(x_n)$ in $\mathcal{M}_+$ such that $\lim x_n=x$ with respect to
the operator norm and each $x_n$ is of the form
$x_n=\sum_{j=1}^k\rho_j e_j$,
where $\rho_1,\ldots,\rho_k\in\mathbb{R}^+$ and
$e_1,\ldots,e_k\in\mathcal{M}$ are mutually orthogonal projections
with $e_je=ee_j=e_j$ $(j\in\{1,\ldots,k\})$. From the previous step, we conclude that $\Phi(x_n)=0$ $(n\in\mathbb{N})$.
Further, from \eqref{1723} we deduce that
\[
\Vert x-x_n\Vert_p=\Vert e(x-x_n)\Vert_p\le\Vert e\Vert_p\Vert x-x_n\Vert\to 0,
\]
and the continuity of $\Phi$ implies that $\Phi(x)=0$, as required.

Finally, since  $S(\mathcal{M},\tau)$ is dense in $L^p(\mathcal{M},\tau)$ and $\Phi$ is continuous,
it may be concluded that $\Phi=0$.
\end{proof}

\begin{corollary}\label{1416}
Let $\mathcal{M}$ be a von Neumann algebra with a normal semifinite faithful trace $\tau$ and
with no minimal projections, 
let $X$ be a $q$-normed space, $0<q\le 1$, and
let $P\colon L^p(\mathcal{M},\tau)\to X$ be a continuous $m$-homogeneous polynomial
with $0<p/m<q$. Suppose that $P$ is orthogonally additive on $S(\mathcal{M},\tau)_+$.
Then $P=0$.
\end{corollary}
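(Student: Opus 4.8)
The plan is to derive $P=0$ by chaining together the representation theorem for orthogonally additive polynomials on $L^p(\mathcal{M},\tau)$ with the triviality of the relevant space of continuous linear maps. The key observation is that both ingredients are already in place: a $q$-normed space is in particular a (Hausdorff) topological linear space, so Theorem~\ref{1527} applies verbatim, and the assumption that $P$ be orthogonally additive on $S(\mathcal{M},\tau)_+$ is exactly condition~(iii) of that theorem.

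First I would invoke the implication (iii)$\Rightarrow$(i) of Theorem~\ref{1527} to produce a continuous linear map $\Phi\colon L^{p/m}(\mathcal{M},\tau)\to X$ satisfying $P(x)=\Phi(x^m)$ for every $x\in L^p(\mathcal{M},\tau)$. The whole content of the corollary is then that, under the present hypotheses, this linearizing map $\Phi$ must vanish. Indeed, $\Phi$ is a continuous linear map from $L^{p/m}(\mathcal{M},\tau)$ into the $q$-normed space $X$; the exponent satisfies $0<p/m<q$ by assumption; and $\mathcal{M}$ has no minimal projections. These are precisely the hypotheses of Proposition~\ref{1641} with $p/m$ in place of $p$, so applying it yields $\Phi=0$.

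Finally, $\Phi=0$ forces $P(x)=\Phi(x^m)=0$ for every $x\in L^p(\mathcal{M},\tau)$, i.e.\ $P=0$, which is the assertion. There is no substantive obstacle here beyond confirming that the hypotheses of the two cited results align: one need only check that a $q$-normed space meets the topological-linear-space requirement of Theorem~\ref{1527}, and that the exponent transforms correctly from $p$ to $p/m$ when passing from the polynomial $P$ to its associated linear map $\Phi$. Both verifications are immediate, so the corollary follows at once from Theorem~\ref{1527} and Proposition~\ref{1641}.
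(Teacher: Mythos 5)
Your proposal is correct and is exactly the paper's argument: the paper proves Corollary~\ref{1416} by combining Theorem~\ref{1527} (to obtain the linearizing map $\Phi\colon L^{p/m}(\mathcal{M},\tau)\to X$) with Proposition~\ref{1641} applied with exponent $p/m$ (to conclude $\Phi=0$, hence $P=0$). The hypothesis checks you flag (a $q$-normed space is a topological linear space, and the exponent passes from $p$ to $p/m$) are the right ones and present no difficulty.
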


\begin{proof}
This is a straightforward consequence of Theorem~\ref{1527} and 
Proposition~\ref{1641}.
\end{proof}

We now turn our attention to the complex-valued polynomials. 
In this setting the representation given in Theorem~\ref{1527} 
has a particularly significant integral form, because of the well-known representation
of the dual of the $L^p$-spaces.
The trace gives rise to a distinguished contractive positive linear functional on $L^1(\mathcal{M},\tau)$, 
still denoted by $\tau$. 
By \eqref{1723}, if $\frac{1}{p}+\frac{1}{q}=1$, 
for each $\zeta\in L^q(\mathcal{M},\tau)$, the formula
\begin{equation}\label{2019}
\Phi_\zeta(x)=\tau(\zeta x)\quad (x\in L^p(\mathcal{M},\tau))
\end{equation}
defines a continuous linear functional on $L^p(\mathcal{M},\tau)$.
Further, in the case where $1\le p<\infty$,
the map $\zeta\mapsto\Phi_\zeta$ is an isometric isomorphism from $L^q(\mathcal{M},\tau)$
onto the dual space of $L^p(\mathcal{M},\tau)$.
It is immediate to see that $\Phi_\zeta^*=\Phi_{\zeta^*}$, so that $\Phi_\zeta$ is hermitian if and only
if $\zeta$ is self-adjoint.

\begin{corollary}\label{1415}
Let $\mathcal{M}$ be a von Neumann algebra with a normal semifinite faithful trace $\tau$, and 
let $P\colon L^p(\mathcal{M},\tau)\to \mathbb{C}$ be a continuous $m$-homogeneous polynomial
with $m\le p<\infty$.
Then the following conditions are equivalent:
\begin{enumerate}
\item[(i)]
there exists $\zeta\in L^{r}(\mathcal{M},\tau)$ 
such that $P(x)=\tau(\zeta x^m)$  $(x\in L^p(\mathcal{M},\tau))$,
where $r=p/(p-m)$ (with the convention that $p/0=\infty$);
\item[(ii)]
the polynomial $P$ is orthogonally additive on $L^p(\mathcal{M},\tau)_{\textup{sa}}$;
\item[(iii)]
the polynomial $P$ is orthogonally additive on $S(\mathcal{M},\tau)_+$.
\end{enumerate}
If the conditions are satisfied, then $\zeta$ 
is unique and $\Vert P\Vert\le\Vert\zeta\Vert_{r}\le 2\Vert P\Vert$;
moreover, if $P$ is hermitian, then $\zeta$ is self-adjoint and $\Vert\zeta\Vert_{r}=\Vert P\Vert$.
\end{corollary}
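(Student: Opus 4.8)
The plan is to read this corollary as the $X=\mathbb{C}$ specialization of Theorem~\ref{1527}, with the abstract linear map $\Phi$ translated into an element $\zeta$ of an $L^r$-space by means of the duality recalled just before the statement. The crucial preliminary observation is that the hypothesis $m\le p$ forces $p/m\ge 1$, so that $L^{p/m}(\mathcal{M},\tau)$ is a genuine Banach space and the isometric isomorphism $\zeta\mapsto\Phi_\zeta$ identifies its dual with $L^{r}(\mathcal{M},\tau)$, where $r$ is the conjugate exponent of $p/m$; a direct computation gives $\tfrac{m}{p}+\tfrac{1}{r}=1$, that is, $r=p/(p-m)$, with $r=\infty$ exactly when $p=m$, in accordance with the stated convention $p/0=\infty$.

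For the equivalences I would argue as follows. To see (i)$\Rightarrow$(ii), note that $P(x)=\tau(\zeta x^m)=\Phi_\zeta(x^m)=P_{\Phi_\zeta}(x)$, so Theorem~\ref{1242}(i) immediately yields that $P$ is orthogonally additive on $L^p(\mathcal{M},\tau)_{\textup{sa}}$. The implication (ii)$\Rightarrow$(iii) is trivial, since $S(\mathcal{M},\tau)_+\subset L^p(\mathcal{M},\tau)_{\textup{sa}}$. For (iii)$\Rightarrow$(i) I would invoke Theorem~\ref{1527}: condition (iii) produces a continuous linear functional $\Phi\colon L^{p/m}(\mathcal{M},\tau)\to\mathbb{C}$ with $P(x)=\Phi(x^m)$, and then the duality delivers a unique $\zeta\in L^{r}(\mathcal{M},\tau)$ with $\Phi=\Phi_\zeta$, whence $P(x)=\tau(\zeta x^m)$.

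The quantitative part requires no new ideas. Uniqueness of $\zeta$ follows from the uniqueness of $\Phi$ in Theorem~\ref{1527} together with the injectivity of $\zeta\mapsto\Phi_\zeta$. For the norm bounds I would apply Theorem~\ref{1242}(iii) with $X=\mathbb{C}$, so $q=1$, obtaining $\tfrac{1}{2}\Vert\Phi\Vert\le\Vert P\Vert\le\Vert\Phi\Vert$; since the isomorphism is isometric, $\Vert\Phi\Vert=\Vert\zeta\Vert_{r}$, and the two inequalities rearrange at once to $\Vert P\Vert\le\Vert\zeta\Vert_{r}\le 2\Vert P\Vert$. For the hermitian case, Theorem~\ref{1242}(iv) gives that $P$ is hermitian precisely when $\Phi$ is, in which case $\Vert P\Vert=\Vert\Phi\Vert$; combining this with the recalled equivalence ``$\Phi_\zeta$ hermitian $\Leftrightarrow\zeta$ self-adjoint'' yields that $\zeta$ is self-adjoint and $\Vert\zeta\Vert_{r}=\Vert P\Vert$.

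The argument is in essence bookkeeping built on the two main theorems, so I do not anticipate a genuine obstacle. The only point that demands care is the verification that $p/m\ge 1$, which is exactly what legitimizes passing from the abstract functional $\Phi$ to its integral representation via $\tau$; this is the reason the hypothesis is imposed as $m\le p$ rather than merely $0<p$, and it is also what keeps us safely away from the degenerate regime of Corollary~\ref{1416}.
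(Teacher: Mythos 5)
Your proposal is correct and is essentially the paper's own proof: the paper simply cites Theorems~\ref{1242} and~\ref{1527}, relying on the duality $\zeta\mapsto\Phi_\zeta$ recalled in \eqref{2019} exactly as you do, and your bookkeeping (the conjugate-exponent computation $r=p/(p-m)$, the $q=1$ norm bounds from Theorem~\ref{1242}(iii), and the hermitian case via Theorem~\ref{1242}(iv) and $\Phi_\zeta^*=\Phi_{\zeta^*}$) is precisely the intended filling-in of that citation.
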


\begin{proof}
This follows from Theorems~\ref{1242} and~\ref{1527}.
\end{proof}

Let $H$ be a Hilbert space.
We denote by $\traza $ the usual trace on the von Neumann algebra $\mathcal{B}(H)$.
Then $L^p(\mathcal{B}(H),\traza )$, with $0<p<\infty$, is the Schatten class
$S^p(H)$. 
In the case where $0<p<q$, we have $S^p(H)\subset S^q(H)\subset\mathcal{K}(H)$ and 
$\Vert x\Vert\le\Vert x\Vert_q\le\Vert x\Vert_p$ $(x\in S^p(H))$.
It is clear that $S(\mathcal{B}(H),\traza )=\mathcal{F}(H)$, the two-sided ideal of $\mathcal{B}(H)$ 
consisting of the finite-rank operators.
Thus, the following result is an immediate consequence of Corollary~\ref{1415}.

\begin{corollary}
Let $H$ be a Hilbert space, and 
let $P\colon S^p(H)\to \mathbb{C}$ be a continuous $m$-homogeneous polynomial
with $m< p<\infty$. 
Then the following conditions are equivalent:
\begin{enumerate}
\item[(i)]
there exists $\zeta\in S^{r}(H)$ 
such that $P(x)=\traza (\zeta x^m)$  $(x\in S^p(H))$,
where $r=p/(p-m)$;
\item[(ii)]
the polynomial $P$ is orthogonally additive on $S^p(H)_{\textup{sa}}$;
\item[(iii)]
the polynomial $P$ is orthogonally additive on $\mathcal{F}(H)_+$.
\end{enumerate}
If the conditions are satisfied, then  $\zeta$ 
is unique and $\Vert P\Vert\le\Vert\zeta\Vert_{r}\le 2\Vert P\Vert$;
moreover, if $P$ is hermitian, then $\zeta$ is self-adjoint and $\Vert\zeta\Vert_{r}=\Vert P\Vert$.
\end{corollary}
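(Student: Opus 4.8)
The plan is to obtain this statement simply by specialising Corollary~\ref{1415} to the von Neumann algebra $\mathcal{M}=\mathcal{B}(H)$ equipped with its canonical trace $\tau=\traza$. Since all of the genuine work has already been carried out in the general tracial setting, the proof reduces to translating the hypotheses and conclusions of Corollary~\ref{1415} into the language of Schatten classes.

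First I would recall that $\traza$ is a normal semifinite faithful trace on $\mathcal{B}(H)$, so that Corollary~\ref{1415} is applicable with $\mathcal{M}=\mathcal{B}(H)$ and $\tau=\traza$. Next I would invoke the identifications recorded immediately before the statement, namely $L^p(\mathcal{B}(H),\traza)=S^p(H)$ and $L^{r}(\mathcal{B}(H),\traza)=S^{r}(H)$, together with $S(\mathcal{B}(H),\traza)=\mathcal{F}(H)$. Under these identifications, condition~(i) of Corollary~\ref{1415} becomes the existence of $\zeta\in S^{r}(H)$ with $P(x)=\traza(\zeta x^m)$, condition~(ii) becomes orthogonal additivity on $S^p(H)_{\textup{sa}}$, and condition~(iii) becomes orthogonal additivity on $\mathcal{F}(H)_+$; these are precisely the three conditions of the present corollary. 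The norm estimates $\Vert P\Vert\le\Vert\zeta\Vert_{r}\le 2\Vert P\Vert$ and the hermitian refinement transfer verbatim.

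The only point requiring any attention is the exponent $r=p/(p-m)$. Here the hypothesis is $m<p$ strictly, rather than the $m\le p$ of Corollary~\ref{1415}; this is a stronger assumption, so the corollary certainly applies, and the strict inequality guarantees $p-m>0$, whence $r$ is finite and $S^{r}(H)$ is a genuine Schatten class rather than the limiting case $r=\infty$. With this observation the deduction is complete, and I expect no substantive obstacle: the whole content resides in Corollary~\ref{1415} and in the standard identification of the tracial $L^p$-spaces over $(\mathcal{B}(H),\traza)$ with the Schatten classes.
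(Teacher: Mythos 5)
Your proposal is correct and is essentially identical to the paper's own argument: the paper likewise deduces this corollary by specialising Corollary~\ref{1415} to $\mathcal{M}=\mathcal{B}(H)$ with $\tau=\traza$, using the identifications $L^p(\mathcal{B}(H),\traza)=S^p(H)$ and $S(\mathcal{B}(H),\traza)=\mathcal{F}(H)$. Your remark that the strict inequality $m<p$ makes $r=p/(p-m)$ finite (so that $\zeta$ lies in a genuine Schatten class, the case $p\le m$ being treated separately in Corollary~\ref{1418}) is exactly the point implicit in the paper's formulation.
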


\begin{corollary}
Let $H$ be a Hilbert space, and 
let $P\colon \mathcal{K}(H)\to \mathbb{C}$ be a continuous $m$-homogeneous polynomial. 
Then the following conditions are equivalent:
\begin{enumerate}
\item[(i)]
there exists $\zeta\in S^1(H)$ 
such that $P(x)=\traza (\zeta x^m)$  $(x\in\mathcal{K}(H))$;
\item[(ii)]
the polynomial $P$ is orthogonally additive on $\mathcal{K}(H)_{\textup{sa}}$;
\item[(iii)]
the polynomial $P$ is orthogonally additive on $\mathcal{F}(H)_+$.
\end{enumerate}
If the conditions are satisfied, then  $\zeta$ 
is unique and $\Vert P\Vert\le\Vert\zeta\Vert_{1}\le 2\Vert P\Vert$;
moreover, if $P$ is hermitian, then $\zeta$ is self-adjoint and $\Vert\zeta\Vert_{1}=\Vert P\Vert$.
\end{corollary}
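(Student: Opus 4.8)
The plan is to regard $\mathcal{K}(H)$ as a $C^*$-algebra of real rank zero and apply Theorem~\ref{1137}, and then to convert the resulting continuous functional into a trace-class operator via the standard duality $\mathcal{K}(H)^*\cong S^1(H)$. First I would check that $\mathcal{K}(H)$ meets the hypotheses of Theorem~\ref{1137}: it has real rank zero (as already recorded in the text), and the net of finite-rank orthogonal projections indexed by the finite subsets of a fixed orthonormal basis of $H$ is an increasing approximate unit of projections. Taking $\mathcal{A}=\mathcal{K}(H)$ and $X=\mathbb{C}$, Theorem~\ref{1137} then equates the existence of a continuous linear functional $\Phi\colon\mathcal{K}(H)\to\mathbb{C}$ with $P=P_\Phi$, the orthogonal additivity of $P$ on $\mathcal{K}(H)_{\textup{sa}}$, and the orthogonal additivity of $P$ on $\mathcal{K}(H)_+$.

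The easy implications are quick. For (i)$\Rightarrow$(ii): if $P(x)=\traza(\zeta x^m)$ with $\zeta\in S^1(H)$, then $x\mapsto\traza(\zeta x)$ is a continuous linear functional on $\mathcal{K}(H)$ (H\"older's inequality for $\zeta\in S^1(H)$ and $x\in\mathcal{K}(H)\subset\mathcal{B}(H)$) and $P=P_\Phi$ for this $\Phi$, so Theorem~\ref{1522}(i) yields orthogonal additivity on $\mathcal{K}(H)_{\textup{sa}}$. The step (ii)$\Rightarrow$(iii) is immediate, since $\mathcal{F}(H)_+\subset\mathcal{K}(H)_{\textup{sa}}$.

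The substantive direction is (iii)$\Rightarrow$(i), and here lies the one genuine obstacle: Theorem~\ref{1137} demands orthogonal additivity on all of $\mathcal{K}(H)_+$, whereas (iii) only supplies it on the finite-rank cone $\mathcal{F}(H)_+$. So the key step is to upgrade (iii) to orthogonal additivity on $\mathcal{K}(H)_+$. Given orthogonal $x,y\in\mathcal{K}(H)_+$, I would use the spectral decomposition to form finite-rank positive spectral truncations $x_n\to x$ and $y_n\to y$ in operator norm, with $\soporte(x_n)\le\soporte(x)$ and $\soporte(y_n)\le\soporte(y)$; since $x\perp y$ forces $\soporte(x)\perp\soporte(y)$, these truncations satisfy $x_n\perp y_n$ and lie in $\mathcal{F}(H)_+$. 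Hence $P(x_n+y_n)=P(x_n)+P(y_n)$ by (iii), and the continuity of $P$ gives $P(x+y)=P(x)+P(y)$ in the limit. With orthogonal additivity now in force on $\mathcal{K}(H)_+$, Theorem~\ref{1137} produces a unique continuous $\Phi\colon\mathcal{K}(H)\to\mathbb{C}$ with $P=P_\Phi$, and the duality $\mathcal{K}(H)^*\cong S^1(H)$ furnishes a unique $\zeta\in S^1(H)$ with $\Phi(x)=\traza(\zeta x)$ and $\|\Phi\|=\|\zeta\|_1$, whence $P(x)=\traza(\zeta x^m)$.

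Finally, the quantitative claims follow by specialising the $C^*$-algebra results to $X=\mathbb{C}$, which is a $1$-normed space. Theorem~\ref{1522}(iii) with $q=1$ gives $2^{-1}\|\Phi\|\le\|P\|\le\|\Phi\|$, and combining this with $\|\Phi\|=\|\zeta\|_1$ yields exactly $\|P\|\le\|\zeta\|_1\le 2\|P\|$. Uniqueness of $\zeta$ comes from Theorem~\ref{1522}(ii) together with the injectivity of $\zeta\mapsto\Phi_\zeta$. In the hermitian case, the identity $\Phi_\zeta^*=\Phi_{\zeta^*}$ shows $\Phi$ is hermitian precisely when $\zeta$ is self-adjoint, and Theorem~\ref{1522}(iv) then sharpens the estimate to the equality $\|P\|=\|\zeta\|_1$. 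I expect the approximation argument upgrading $\mathcal{F}(H)_+$ to $\mathcal{K}(H)_+$ to be the only point requiring care; everything else is a direct transcription of the general $C^*$-algebra theory.
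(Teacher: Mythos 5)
Your proposal is correct and follows essentially the same route as the paper: upgrading orthogonal additivity from $\mathcal{F}(H)_+$ to $\mathcal{K}(H)_+$ by spectral truncation and continuity, invoking Theorem~\ref{1137} for $\mathcal{K}(H)$ (real rank zero, increasing approximate unit of finite-rank projections), and then passing through the duality $\mathcal{K}(H)^*\cong S^1(H)$, with the norm estimates and the hermitian case read off from Theorem~\ref{1522}. The paper's proof is exactly this argument, so there is nothing to add.
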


\begin{proof}
In order to prove the equivalence of the conditions we are reduced to prove that (iii)$\Rightarrow$(i).
Suppose that (iii) holds.
Let $x,y\in\mathcal{K}(H)_+$ such that $x\perp y$. From the spectral decomposition of both $x$ and $y$
we deduce that there exist sequences $(x_n)$ and $(y_n)$ in $\mathcal{F}(H)_+$ such that
$\lim x_n=x$, $\lim y_n=y$, and $x_m\perp y_n$ $(m,n\in\mathbb{N})$. Then 
\[
P(x+y)=\lim P(x_n+y_n)=\lim \bigl(P(x_n)+P(y_n)\bigr)=P(x)+P(y).
\]
This shows that $P$ is orthogonally additive on $\mathcal{K}(H)_+$.
Since the $C^*$-algebra $\mathcal{K}(H)$ has real rank zero and 
the net consisting of all finite-rank projections is an increasing approximate unit,
Theorem~\ref{1137} applies and gives a continuous linear functional $\Phi$ on $\mathcal{K}(H)$ such 
that $P(x)=\Phi(x^m)$ $(x\in\mathcal{K}(H))$. It is well-known that the map $\zeta\mapsto\Phi_\zeta$,
as defined in \eqref{2019}, gives an isometric isomorphism from $S^1(H)$ onto the dual of $\mathcal{K}(H)$, 
so that there exists  $\zeta\in S^1(H)$ such that $\Phi(x)=\traza (\zeta x)$ $(x\in\mathcal{K}(H))$ and 
$\Vert\zeta\Vert_1=\Vert\Phi\Vert$. Thus we obtain (i).
The additional properties of the result follow from Theorem~\ref{1522}.
\end{proof}

\begin{corollary}\label{1418}
Let $H$ be a Hilbert space, and 
let $P\colon S^p(H)\to \mathbb{C}$ be a continuous $m$-homogeneous polynomial
with $0<p\le m$. 
Then the following conditions are equivalent:
\begin{enumerate}
\item[(i)]
there exists $\zeta\in\mathcal{B}(H)$ 
such that $P(x)=\traza (\zeta x^m)$  $(x\in S^p(H))$;
\item[(ii)]
the polynomial $P$ is orthogonally additive on $S^p(H)_{\textup{sa}}$;
\item[(iii)]
the polynomial $P$ is orthogonally additive on $\mathcal{F}(H)_+$.
\end{enumerate}
If the conditions are satisfied, then $\zeta$ 
is unique and $\Vert P\Vert\le\Vert\zeta\Vert\le 2\Vert P\Vert$;
moreover, if $P$ is hermitian, then $\zeta$ is self-adjoint and $\Vert\zeta\Vert=\Vert P\Vert$.
\end{corollary}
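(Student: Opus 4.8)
The plan is to deduce everything from Theorem~\ref{1527} together with the identification of the dual of the quasi-Banach space $S^{r}(H)$ for $0<r\le 1$. Taking $\mathcal{M}=\mathcal{B}(H)$, $\tau=\traza$, and $X=\mathbb{C}$, we have $L^{p/m}(\mathcal{B}(H),\traza)=S^{p/m}(H)$ with $0<p/m\le 1$ and $S(\mathcal{B}(H),\traza)=\mathcal{F}(H)$, so Theorem~\ref{1527} already gives the equivalence of (ii), (iii), and the existence of a continuous linear functional $\Phi\colon S^{p/m}(H)\to\mathbb{C}$ with $P(x)=\Phi(x^m)$. Thus the whole statement reduces to showing that such a $\Phi$ is exactly of the form $\Phi=\Phi_\zeta$ as in \eqref{2019}, with the stated norm control.

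The key step is therefore to prove that the map $\zeta\mapsto\Phi_\zeta$, where $\Phi_\zeta(y)=\traza(\zeta y)$, is an isometric isomorphism of $\mathcal{B}(H)$ onto the dual of $S^{p/m}(H)$. First I would record that for $0<r\le 1$ one has $\Vert y\Vert_1\le\Vert y\Vert_r$ for every $y\in S^r(H)$, whence $\vert\traza(\zeta y)\vert\le\Vert\zeta\Vert\,\Vert y\Vert_1\le\Vert\zeta\Vert\,\Vert y\Vert_r$; this shows $\Phi_\zeta$ is continuous with $\Vert\Phi_\zeta\Vert\le\Vert\zeta\Vert$, and injectivity of $\zeta\mapsto\Phi_\zeta$ is clear. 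For surjectivity, given a continuous linear functional $\Phi$ on $S^{p/m}(H)$, I would introduce the rank-one operator $\theta_{\xi,\eta}\colon h\mapsto\langle h,\eta\rangle\xi$, which satisfies $\Vert\theta_{\xi,\eta}\Vert_{p/m}=\Vert\xi\Vert\,\Vert\eta\Vert$ and $\traza(\zeta\theta_{\xi,\eta})=\langle\zeta\xi,\eta\rangle$. Then $(\xi,\eta)\mapsto\Phi(\theta_{\xi,\eta})$ is a bounded sesquilinear form of norm at most $\Vert\Phi\Vert$, so the Riesz representation of bounded sesquilinear forms yields $\zeta\in\mathcal{B}(H)$ with $\Phi(\theta_{\xi,\eta})=\langle\zeta\xi,\eta\rangle=\traza(\zeta\theta_{\xi,\eta})$ and $\Vert\zeta\Vert\le\Vert\Phi\Vert$. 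Since $\mathcal{F}(H)$ is the linear span of the operators $\theta_{\xi,\eta}$ and is dense in $S^{p/m}(H)$, continuity forces $\Phi=\Phi_\zeta$; combining $\Vert\Phi_\zeta\Vert\le\Vert\zeta\Vert\le\Vert\Phi\Vert=\Vert\Phi_\zeta\Vert$ gives the isometry $\Vert\Phi\Vert=\Vert\zeta\Vert$.

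With this identification in hand the quantitative assertions are routine. Theorem~\ref{1242}(iii) applied with the $1$-normed space $X=\mathbb{C}$ gives $\tfrac12\Vert\Phi\Vert\le\Vert P\Vert\le\Vert\Phi\Vert$, and since $\Vert\Phi\Vert=\Vert\zeta\Vert$ this is precisely $\Vert P\Vert\le\Vert\zeta\Vert\le 2\Vert P\Vert$; uniqueness of $\zeta$ follows from the uniqueness of $\Phi$ in Theorem~\ref{1527} and the injectivity of $\zeta\mapsto\Phi_\zeta$. For the hermitian case, Theorem~\ref{1242}(iv) shows that $P$ is hermitian if and only if $\Phi$ is, in which case $\Vert P\Vert=\Vert\Phi\Vert$, while $\Phi_\zeta^*=\Phi_{\zeta^*}$ shows $\Phi_\zeta$ is hermitian if and only if $\zeta$ is self-adjoint; together these give $\zeta=\zeta^*$ and $\Vert\zeta\Vert=\Vert P\Vert$.

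The only genuinely new point is the duality $\bigl(S^{r}(H)\bigr)^*\cong\mathcal{B}(H)$ in the non-normable range $0<r<1$, and this is where I expect the main subtlety to lie, since $S^r(H)$ is then merely a quasi-Banach space and local convexity is unavailable. The rank-one/sesquilinear-form argument above is designed precisely to sidestep this: it never invokes convexity of $S^r(H)$ but relies only on the density of $\mathcal{F}(H)$ and the elementary computation of the Schatten norms of rank-one operators, so it goes through verbatim for every $0<p/m\le 1$.
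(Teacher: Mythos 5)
Your proposal is correct and follows essentially the same route as the paper: the paper likewise reduces Corollary~\ref{1418} via Theorems~\ref{1242} and~\ref{1527} to proving that $\zeta\mapsto\Phi_\zeta$ is an isometric isomorphism of $\mathcal{B}(H)$ onto the dual of $S^{p/m}(H)$, and establishes this by exactly your argument --- the bound $\vert\traza(\zeta x)\vert\le\Vert\zeta\Vert\Vert x\Vert_1\le\Vert\zeta\Vert\Vert x\Vert_{p/m}$ in one direction, and in the other the bounded sesquilinear form $(\xi,\eta)\mapsto\Phi(\xi\otimes\eta)$ on rank-one operators, Riesz representation, and density of $\mathcal{F}(H)$ in $S^{p/m}(H)$, with the hermitian case handled through $\Phi_\zeta^*=\Phi_{\zeta^*}$ and Theorem~\ref{1242}(iv).
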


\begin{proof}
By Theorems \ref{1242} and \ref{1527},
it suffices to show that the map $\zeta\mapsto\Phi_\zeta$, as defined in \eqref{2019},
gives isometric isomorphism from $\mathcal{B}(H)$ onto the dual of $S^{p/m}(H)$. 
This is probably well-known, but we are not aware of any reference. Consequently, 
it may be helpful to include a proof of this fact.
If $\zeta\in\mathcal{B}(H)$ and $x\in S^{p/m}(H)$, then, by \eqref{1723}, $\zeta x\in S^{p/m}(H)$,
so that $\zeta x\in S^1(H)$ and
\[
\bigl\vert\traza (\zeta x)\bigr\vert\le
\Vert \zeta x\Vert_1\le\Vert\zeta\Vert\Vert x\Vert_{1}\le\Vert\zeta\Vert\Vert x\Vert_{p/m},
\]
which shows that $\Phi_\zeta$ is a continuous linear functional on $S^{p/m}(H)$ with $\Vert\Phi_\zeta\Vert\le\Vert\zeta\Vert$.
Conversely,
assume that $\Phi$ is a continuous linear functional on $S^{p/m}(H)$. 
For each $\xi,\eta\in H$, let $\xi\otimes\eta\in\mathcal{F}(H)$ defined by
\[
\bigl(\xi\otimes\eta\bigr)(\psi)=\langle\psi\vert\eta\rangle\xi\quad(\psi\in H),
\]
and define $\varphi\colon H\times H\to\mathbb{C}$ by
\[
\varphi(\xi,\eta)=\Phi(\xi\otimes\eta)\quad (\xi,\eta\in H).
\]
It is easily checked that $\varphi$ is a continuous sesquilinear functional with 
$\Vert\varphi\Vert\le\Vert\Phi\Vert$.
Therefore there exists $\zeta\in\mathcal{B}(H)$ such that 
$\langle\zeta(\xi)\vert\eta\rangle=\varphi(\xi,\eta)$ for all $\xi,\eta\in H$ and
$\Vert\zeta\Vert\le\Vert\Phi\Vert$.
The former condition implies that
\[
\Phi_\zeta(\xi\otimes\eta)=
\traza (\zeta\xi\otimes\eta)=
\langle\zeta(\xi)\vert\eta\rangle=
\varphi(\xi,\eta)=
\Phi(\xi\otimes\eta)
\] 
for all $\xi,\eta\in H$,
which gives $\Phi_\zeta(x)=\Phi(x)$ for each $x\in\mathcal{F}(H)$. Since $\mathcal{F}(H)$
is dense in $S^{p/m}(H)$, it follows that $\Phi_\zeta=\Phi$.
Further, we have $\Vert\zeta\Vert\le\Vert\Phi\Vert=\Vert\Phi_\zeta\Vert\le\Vert\zeta\Vert$.
Finally,
it is immediate to see that $\Phi_\zeta^*=\Phi_{\zeta^*}$, so that $\Phi_\zeta$ is hermitian if and only
if $\zeta$ is self-adjoint.
\end{proof}

\begin{proposition}\label{1526}
Let $H$ be a Hilbert space with $\dim H \geq 2$,
let $X$ be a topological linear space, and 
let $P\colon S^p(H) \to X$ be a continuous $m$-homogeneous polynomial with $0<p<\infty$.
Suppose that $P$ is orthogonally additive on $S^p(H)$. 
Then $P=0$.
\end{proposition}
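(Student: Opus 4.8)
The plan is to mimic the proof of Proposition~\ref{1525} almost verbatim, replacing the von Neumann algebra $\mathcal{B}(H)$ by the Schatten class $S^p(H)$ and Corollary~\ref{1523} by Theorem~\ref{1527}. The point that legitimizes this transcription is that left multiplication by a unitary $v\in\mathcal{B}(H)$ maps $S^p(H)$ isometrically onto itself and preserves orthogonality: if $x\perp y$, then $vx\perp vy$. Hence, for each unitary $v$, the map $P_v\colon S^p(H)\to X$ defined by $P_v(x)=P(vx)$ is again a continuous $m$-homogeneous polynomial that is orthogonally additive on $S^p(H)$, and in particular on $\mathcal{F}(H)_+=S(\mathcal{B}(H),\traza)_+$. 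Theorem~\ref{1527} then supplies a continuous linear map $\Phi_v\colon S^{p/m}(H)\to X$ with $P(vx)=\Phi_v(x^m)$ for all $x\in S^p(H)$.

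First I would prove that $P(e)=P(e')=0$ whenever $e,e'$ are equivalent finite-rank projections with $e\perp e'$. Choosing a partial isometry $u\in\mathcal{F}(H)\subset S^p(H)$ with $u^*u=e$ and $uu^*=e'$, the same norm computation as in Proposition~\ref{1525} gives $u^2=0$, so that $u\perp u^*$ and $u^m=(u^*)^m=0$ (as $m\ge 2$). Consequently $P_v(u+u^*)=\Phi_v(u^m)+\Phi_v\bigl((u^*)^m\bigr)=0$ for every unitary $v$. Taking $v=1+u+u^*-e-e'$ and $v_\omega=1+\omega u+u^*-e-e'$ with $\omega^m=-1$ (both unitary, since each acts as the identity off the range of $e+e'$ and as a unitary swap on it), the partial-isometry relations $eu=ue'=u^*e=e'u^*=0$, $u^*u=e$, $uu^*=e'$ yield $v(u+u^*)=e+e'$ and $v_\omega(u+u^*)=e+\omega e'$. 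Orthogonal additivity and $m$-homogeneity then give $P(e)+P(e')=0$ and $P(e)-P(e')=0$, whence $P(e)=P(e')=0$.

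Next I would upgrade this to $P(e)=0$ for every finite-rank projection $e$ (these are precisely the projections lying in $S^p(H)$). A rank-one projection admits, since $\dim H\ge 2$, an orthogonal equivalent rank-one projection, so it is covered by the previous step; a projection of rank $n$ decomposes as a sum of $n$ mutually orthogonal rank-one projections, and orthogonal additivity forces $P(e)=0$. Finally, by Lemma~\ref{l1} it suffices to show $P=0$ on the convex set $S^p(H)_+$, whose linear span is $S^p(H)$. For $x\in S^p(H)_+$, the spectral decomposition furnishes positive finite-rank operators $x_n=\sum_{j}\rho_j e_j$, with $\rho_j\ge 0$ and the $e_j$ mutually orthogonal rank-one projections, converging to $x$ in $\Vert\cdot\Vert_p$; orthogonal additivity together with homogeneity gives $P(x_n)=\sum_j\rho_j^m P(e_j)=0$, and continuity of $P$ yields $P(x)=0$.

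Since every step beyond the initial reduction copies Proposition~\ref{1525}, the only genuinely new point — and hence the part that warrants care — is the first paragraph: checking that multiplication by $v$ keeps us inside $S^p(H)$ and that Theorem~\ref{1527} (rather than Corollary~\ref{1523}) is what produces the maps $\Phi_v$. I expect no difficulty from infinite projections here, precisely because $S^p(H)$ contains none, so the finite-rank case exhausts all projections of the space.
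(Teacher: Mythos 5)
Your proof is correct, but it takes a genuinely different route from the paper's. The paper never revisits the unitary-twist argument for $S^p(H)$: it uses the density of $\mathcal{F}(H)$ in $S^p(H)$ together with Lemma~\ref{l1} to reduce to a single element $x\in\mathcal{F}(H)_{\textup{sa}}$, writes $x=\sum_{j=1}^k\alpha_j\,\xi_j\otimes\xi_j$ with $\{\xi_1,\ldots,\xi_k\}$ orthonormal and $k\ge 2$, and observes that the $*$-subalgebra generated by the matrix units $\xi_i\otimes\xi_j$ is a finite-dimensional algebra $*$-isomorphic to $\mathcal{B}(K)$, where $K$ is the linear span of $\{\xi_1,\ldots,\xi_k\}$; since the restriction of $P$ to this subalgebra is a continuous $m$-homogeneous polynomial that is orthogonally additive there, Proposition~\ref{1525} applied to $\mathcal{B}(K)$ gives that this restriction vanishes, hence $P(x)=0$. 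You instead transplant the entire proof of Proposition~\ref{1525} into $S^p(H)$, substituting Theorem~\ref{1527} for Corollary~\ref{1523}, and the points you single out for care are exactly the right ones: left multiplication by a unitary $v\in\mathcal{B}(H)$ is an isometry of $S^p(H)$ preserving orthogonality (so $P_v$ inherits orthogonal additivity from the hypothesis on all of $S^p(H)$, which is needed since $u$ and $u^*$ are not self-adjoint); the partial isometry $u=e'ue$ and the projections involved are finite-rank, so they lie in $S^p(H)$ even though $v$ and $v_\omega$ do not, which is harmless because they act only by left multiplication; and every projection in $S^p(H)$ is finite-rank, so the infinite-projection case of Proposition~\ref{1525} has no analogue here. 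Your finish via Lemma~\ref{l1}, spectral decomposition and continuity is sound. The trade-off: the paper's argument is much shorter and relies only on the Section~2 machinery, exploiting that orthogonal additivity can be tested on finite-dimensional corners of $S^p(H)$; yours is self-contained at the level of $S^p(H)$ and makes the structure of the original argument visible, but at the cost of invoking Theorem~\ref{1527}, a substantially heavier tool than Proposition~\ref{1525}.
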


\begin{proof}
Since $\mathcal{F}(H)$ is dense in $S^p(H)$ and $P$ is continuous,
it suffices to prove that $P$ vanishes on $\mathcal{F}(H)$.
On account of  Lemma~\ref{l1}, we are also reduced to prove that $P$ vanishes on $\mathcal{F}(H)_{\textup{sa}}$.
We continue to use the notation $\xi\otimes\eta$ which was introduced in the proof of Corollary~\ref{1418}.

Let $x\in\mathcal{F}(H)_{\textup{sa}}$. 
Then
$x=\sum_{j=1}^k\alpha_j\xi_j\otimes\xi_j$,
where $k\ge 2$, $\alpha_1,\ldots,\alpha_k\in\mathbb{R}$, and $\{\xi_1,\ldots,\xi_k\}$ is
an orthonormal subset of $H$. 
It is clear that the subalgebra $\mathcal{M}$ of $\mathcal{B}(H)$ generated by
$\bigl\{\xi_i\otimes\xi_j : i,j\in\{1,\ldots,k\}\bigr\}$ is contained in $\mathcal{F}(H)$ and 
it is $\ast$-isomorphic to the von Neumann algebra $\mathcal{B}(K)$, 
where $K$ is the linear span of the set $\{\xi_1,\ldots,\xi_k\}$. By 
Proposition~\ref{1525}, 
$P\mid_{\mathcal{M}}=0$,
and therefore $P(x)=0$. We thus get $P\mid_{\mathcal{F}(H)_{\textup{sa}}}=0$,  as required.
\end{proof}

\end{document}